\newtheorem{theorem}{Theorem}[section]
\newtheorem{lemma}[theorem]{Lemma}
\newtheorem{fact}[theorem]{Fact}
\newtheorem{proposition}[theorem]{Proposition}
\theoremstyle{remark}
\newcommand{\R}{\mathbb{R}}
\newcommand{\N}{\mathbb{N}}
\newcommand{\Q}{\mathbb{Q}}
\title{A Counterexample Regarding C.E.\ Closed Subsets of $[0,1]$ Under Homeomorphisms}
\author{V.~Bosserhoff}
\date{\today}
\begin{document}

\begin{abstract}
We give an example of a computably enumerable closed subset of $[0,1]$ that is not homeomorphic to 
any computably compact space. This answers a question of Koh, Melnikov and Ng \cite{counterex}.
\end{abstract}

\maketitle

\section{Introduction}

A \emph{computable Polish space} is a Polish space $X$ with a compatible metric $d$ and a dense sequence $(x_i)_i$
such that $(i,j) \mapsto d(x_i, x_j)$ is a computable real function. 
The $x_i$ are called the \emph{special points} of $X$. 
For $x \in X$ and $r \in \R^{>0}$ let $B(x,r)$ be the open ball with center $x$ and radius $r$. Call an open ball
\emph{basic} if the center is a special point and the radius is a rational number.
If there is a computable function $f : \N \to \N^{<\omega}$ such that for every $n$
\[
f(n) = (i_1, \ldots, i_k) \ \Rightarrow \ X = B(x_{i_1}, 2^{-n}) \cup \cdots \cup B(x_{i_k}, 2^{-n}),
\]
the computable Polish space $X$ is called \emph{computably compact}. 
See \cite{compcomp} (and also \cite{compstruc}) for an extensive survey of computable and computably compact 
Polish spaces.

A closed subset $K$ of $X$ is \emph{computably enumerable (c.e.) closed} if the set of basic balls 
that intersect $K$ is computably enumerable (w.r.t.\ some standard encoding). For nonempty $K$, this is 
equivalent to the existence of a computable sequence which is dense in $K$. Note that a c.e.\ closed subset can 
itself be seen as a computable Polish space. $K$ is called 
\emph{computably co-enumerable (co-c.e.) closed} if there is a computable sequence of basic balls
that exhausts $X\setminus K$. 
Call $K$ \emph{computably closed} if it is both c.e.\ closed and co-c.e.\ closed. 
Call $K$ \emph{computably compact} if it is compact, c.e.\ closed, and the set of all finite
open covers by basic balls is computably enumerable. This is equivalent to $K$ being compact
and the set of all finite open covers by basic balls that intersect $K$ being computably enumerable.
In Euclidean space, it is equivalent to $K$ being bounded and computably closed. 
See \cite{brattkapresser} for a comparison of these and further notions of effectivity of
subsets of computable metric spaces. 

There has been some interest in examples of effective spaces and subsets
with a lack of computability that is topologically intrinsic, in the sense of persisting under all -- not necessarily computable --
homeomorphisms. For every $n \geq 1$, the article \cite{bosshert} gives examples of c.e.\ closed and co-c.e.\ closed 
subsets of $[0,1]^n$ 
the image of which under any homeomorphism of $\R^n$ is not a computable (and hence not computably 
compact). For the argument in \cite{bosshert}, it is essential that the map is a 
homeomorphism of all of $\R^n$, so it is not suitable to construct an example of a compact computable Polish \emph{space}
that is not homeomorphic to any computably compact Polish space. 
Such examples were later constructed in \cite{abelian,spectra,compcomp} with proofs based on
methods from algebraic topology and topological group theory. These works contribute to a program of separating
various notions of effectivity on topological spaces; see also
\cite{bazhenov,stone,separating,counterex,benshahar,compstruc}. 
In view of the rather advanced topological methods applied in the cited works,
\cite[Question~4.56]{compcomp} asks for a more elementary construction. 
A topologically tame example $K \subseteq [0, 1]^2$ has recently been constructed 
in \cite{counterex}. The proof technique of \cite{counterex}, however, is
not applicable in $\R$. 
The authors thus ask for a corresponding example $K \subseteq [0, 1]$.
The purpose of the present note is to show that in fact:
\begin{theorem}
\label{main}
There exists a perfect c.e.\ closed $K \subseteq [0,1]$ that is not homeomorphic to any computably compact Polish space.
\end{theorem}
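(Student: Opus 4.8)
\emph{Proof strategy (intended approach).}
The plan is to exploit the only topological invariants that a perfect compact subset of $[0,1]$ can carry, namely its \emph{connected components} and which of them are nondegenerate. A homeomorphism sends components to components and arcs (nondegenerate components, each a copy of $[0,1]$) to arcs, so the quotient $K^{\ast}$ of $K$ by its components — a compact totally disconnected space — together with the subset $D^{\ast}\subseteq K^{\ast}$ of points coming from arcs, is a topological invariant of $K$. I will realise $K$ as the Cantor set $C$ with a computably chosen, monotonically growing family of gaps filled in by closed intervals. Filling a gap adds exactly one arc, leaves $C$ (hence perfectness) intact, and only enlarges $K$; consequently, if the set of filled gaps is enumerated as a c.e.\ set then $K$ is automatically c.e.\ closed. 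The encoding will place the filled gaps at a rigidly organised system of locations so that the decorated component space $(K^{\ast},D^{\ast})$ determines a fixed noncomputable c.e.\ set $A$.

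The first and most delicate design step is to make $A$ recoverable from the bare homeomorphism type of $(K^{\ast},D^{\ast})$, and not merely up to a self-homeomorphism of the Cantor set — the latter would destroy all information, since the marked points could be shuffled arbitrarily. To this end I will build $D^{\ast}$ with genuine accumulation structure: a sequence of \emph{apices} $a_0,a_1,\dots$ converging to a single distinguished limit $\ast$, where each $a_n$ is the limit of a bunch of arcs and is itself \emph{either} an arc \emph{or} a single point according to whether $n\in A$. Because the index $n$ must also be read off topologically, $a_n$ will be tagged by a finite, homeomorphism-invariant feature of its neighbourhood (e.g.\ a Cantor--Bendixson-style branching number counting sub-apices accumulating at $a_n$) chosen to be insensitive to the bit $[n\in A]$. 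For perfectness, every isolated point of $K^{\ast}$ is marked, each $a_n$ is non-isolated (so it may legitimately be left unmarked when $n\notin A$), and $\ast$ is the unique point of top rank. A homeomorphism of $K$ must then fix $\ast$, permute the $a_n$ preserving their tags, and preserve whether each $a_n$ is an arc, so it preserves $A$.

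With the construction in place, the proof will derive a contradiction from the assumption that some computably compact $Y$ is homeomorphic to $K$ via an arbitrary (not necessarily computable) $h\colon Y\to K$. The engine is the asymmetry stressed in the introduction: a computably compact presentation supplies \emph{negative}, co-c.e.\ information — exhaustion by finite covers, and the ability to enclose points in clopen sets of small diameter — that the c.e.\ closedness of $K$ alone does not provide (cf.\ \cite{brattkapresser,compcomp}). The target is to decide $A$ from $Y$: for a given $n$, locate the tagged apex $h^{-1}(a_n)$ inside $Y$ and decide whether its component is an arc or a point. Since $A$ is c.e., ``$n\in A$'' is already semidecidable; the whole thrust is that computable compactness of $Y$ should make ``$n\notin A$'' semidecidable as well, forcing $A$ to be computable — the desired contradiction, which rules out any computably compact $Y$ and proves the theorem.

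I expect the main obstacle to be precisely this last detection step. Taken naively, ``the component of $a_n$ is nondegenerate'' is only a $\Sigma^{0}_{2}$ property (``some clopen neighbourhood of the apex fails to shrink below a fixed positive diameter''), which is far too weak to contradict c.e.-ness of $A$; note also that one cannot use connectedness of a clopen piece as the bit, since in $\R$ a connected perfect piece is a single arc and can carry no tag. The heart of the argument will therefore be to use the rigid tagging to \emph{localise} the search to one clopen subpiece of $Y$ and then exploit computable compactness of that piece to pull the arc/point distinction down to a decidable one — in effect interleaving the enumeration of $A$ with the enumerable clopen-cover data of the candidate space $Y$ (equivalently, running a stagewise diagonalisation against an effective listing of all computably compact presentations, where the $\Sigma^{0}_{2}$ cost is absorbed into the verification rather than the c.e.\ construction of $K$, as in \cite{counterex}). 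Making this interleaving succeed while keeping $K$ perfect and c.e.\ closed, and checking that the chosen tags are simultaneously rigid and effectively recognisable, is where the real work lies.
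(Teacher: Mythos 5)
Your geometric instinct is close to the paper's actual construction (the paper's Lemma~\ref{1proper} builds exactly the kind of object you describe: a c.e.\ closed set where gaps between intervals are monotonically filled in, so that a distinguished point is either a limit of clopen pieces or the endpoint of an arc), but the logical engine of your proof fails, and you have half-diagnosed the failure yourself without fixing it. You take $A$ c.e.\ and noncomputable and hope that computable compactness of $Y$ makes ``$n\notin A$'' semidecidable. It does not: the best computable compactness buys at a located point is an enumeration of clopen decompositions (Fact~\ref{decomp}), from which ``the component of $x_n$ is a singleton'' is $\Pi^0_2$ and ``the component of $x_n$ has nonempty interior'' is $\Sigma^0_2$ (Lemma~\ref{inner}); neither comes down to $\Sigma^0_1$, so no contradiction with mere non-computability of a c.e.\ set is possible. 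Your two proposed rescues do not work either: there is no effective listing of all computably compact presentations to diagonalise against (being such a presentation is a $\Pi^0_2$ condition on indices, not c.e.), and the stagewise construction ``as in \cite{counterex}'' is precisely the technique the paper points out is inapplicable in $\R$ --- in $[0,1]$ a nondegenerate component is just an arc and can carry no internal tag, which also undermines your plan to decorate each apex $a_n$ with a ``finite, effectively recognisable'' invariant feature.

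The missing idea is that the separation cannot live at any fixed arithmetical level and must instead happen in the Feiner hierarchy, with an invariant whose detection complexity grows with $n$. In the paper, the address of bit $n$ \emph{is} a rank: a connected component with nonempty interior whose ``proper rank'' (a variant of Cantor--Bendixson rank in which a point has positive rank when it is a limit of disjoint clopen sets) equals $n$. Reading whether such a component exists from a computably compact presentation costs about $2n$ quantifiers (Lemmas~\ref{checkBall} and~\ref{prop}, giving the upper bound $\rho(X)\in\Sigma^0_{m\mapsto 2m+2}$ of Proposition~\ref{upperBound}), while a c.e.\ closed construction can realise the rank-$n$ bit as a nested $(\exists^\infty)^n\forall$ condition, i.e.\ at level $2n+3$ (Lemmas~\ref{1proper} and~\ref{blocks}, giving Proposition~\ref{lowerBound}); the theorem then follows from a set separating $\Sigma^0_{m\mapsto 2m+3}$ from $\Sigma^0_{m\mapsto 2m+2}$ (Lemma~\ref{separate}). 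Note also a structural defect in your design that this machinery must and does address: rank-like tags are monotone (the existence of an $n$-proper point forces $k$-proper points for all $k<n$), so bits cannot be encoded independently as in your apex scheme; the paper neutralises this by requiring $\{2n : n\in\N\}\subseteq A$ and encoding information only in the odd bits, via the parity trick in the definition of $K_m$. Without replacing your single c.e.\ set and the fixed-level arc/point dichotomy by an unbounded-complexity invariant of this kind, the approach cannot be completed.
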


The proof uses the Feiner complexity classes which we will review in Section \ref{feiner}.
Other than that, we only use elementary methods from topology and computability theory. 

\section{The Feiner classes}
\label{feiner}

Let $c : \N \to \N \setminus \{0\}$ be a total computable function. We say that a set $A\subseteq \N$ is in $\Sigma_c^0$ 
if there is a computably enumerable operator $W_e$ such that
\[
n \in A \ \Leftrightarrow \ n \in W_e^{\emptyset^{(c(n)-1)}}
\]
where $\emptyset^{(n)}$ is the $n$-th Turing jump. Also define $\Pi_c^0 := \{ A ~:~ \N \setminus A \in \Sigma_c^0 \}$ 
and $\Delta_c^0 := \Sigma_c^0 \cap \Pi_c^0$.
For constant $c$, we recover the usual $\Sigma$-, $\Pi$- and $\Delta$-classes of the artithmetical hierarchy \cite{rogers}. 
Feiner \cite{feiner} introduced the class $\Delta_c^0$ for $c$ of the form $n \mapsto an + b$ and used
it to construct a c.e.\ Boolean algebra not isomorphic to any computable Boolean algebra. See also the expositions
and further references in e.g.\ \cite{ashknight}, \cite{compstruc}. 

The Feiner classes have been used indirectly in computable
topological structure theory before, when certain results were obtain by redution to 
results in computable algebraic structure theory. 
In \cite{stone} for example, Feiner's example was used to construct a computable topological compact Polish space 
not homeomorphic to any computably metrized Polish space. 

The classical characterizations of the arithmetical $\Sigma$- and $\Pi$- classes 
in terms of quantified computable predicates generalize to the Feiner classes as the classical constructions 
(see e.g.\ \cite[Theorem~14.VIII]{rogers}) are uniform in the number of quantifiers. 
So if $\langle \cdots \rangle : \N^{<\omega} \to \N$ is a standard bi-computable tupling function 
(see e.g.\ \cite[p.~71]{rogers}) and 
\[
Q_n := \begin{cases}
    ``\forall'' & \textrm{ if $n$ is even,} \\
    ``\exists'' & \textrm{ if $n$ is odd,} \\
\end{cases}
\]
then $A \in \Sigma_c^0$ if and only if there exists a computable predicate $p$ such that
\[
x \in A \ \Leftrightarrow \ (\exists y_1) (\forall y_2) \cdots (Q_{c(x)} y_{c(x)}) \ p(\langle x, y_1, \ldots y_{c(x)} \rangle).
\]
A corresponding characterization holds for the $\Pi$-classes. 

It will be convenient later to also work with the following characterization in terms of the quantifier ``$\exists^\infty$''
(meaning ``there exist infinitely many''): 
$A \in \Pi_{n \mapsto 2n+1}^0$ if and only if there is a computable predicate $p$ such that
\[
n \in A \ \Leftrightarrow \ (\exists^\infty y_n) \cdots (\exists^\infty y_1)(\forall x)\ p(\langle n, y_n, \ldots, y_1, x \rangle).
\]
(For the standard arithmetical class $\Pi_{2n+1}^0$ the corresponding statement is \cite[Theorem~14.XVIII]{rogers}. 
The constructions are uniform in $n$.)

The Feiner classes in fact form a hierarchy. We will only need the following special case:
\begin{lemma}
\label{separate}
There exists an $A \in \Sigma_{n \mapsto 2n + 3}^0 \setminus \Sigma_{n \mapsto 2n + 2}^0$ 
with $\{2n ~:~ n\in \N\} \subseteq A$.
\end{lemma}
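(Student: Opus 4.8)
The plan is to diagonalize, exploiting that we are free to do anything we like on the odd numbers. First I would recall that, by the very definition of the Feiner classes, the sets in $\Sigma_{n \mapsto 2n+2}^0$ are exactly those of the form $B_e := \{\, n : n \in W_e^{\emptyset^{(2n+1)}} \,\}$ as $e$ ranges over $\N$ (here the relevant oracle level is $c(n)-1 = 2n+1$). Since the evens are forced to lie in $A$, I would reserve the odd number $2e+1$ as the witness against the potential $\Sigma_{n\mapsto 2n+2}^0$ set $B_e$ and set
\[
A := \{\, 2n : n \in \N \,\} \cup \{\, 2e+1 : 2e+1 \notin W_e^{\emptyset^{(2(2e+1)+1)}} \,\}.
\]
By construction $\{2n : n \in \N\} \subseteq A$, which gives the last clause of the lemma for free.

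Next I would verify $A \notin \Sigma_{n\mapsto 2n+2}^0$. Suppose toward a contradiction that $A = B_e$ for some $e$, and evaluate both descriptions at the witness $w = 2e+1$. On one hand, $A = B_e$ yields $w \in A \Leftrightarrow w \in W_e^{\emptyset^{(2w+1)}}$; on the other hand, the definition of $A$ yields $w \in A \Leftrightarrow w \notin W_e^{\emptyset^{(2w+1)}}$. This is a contradiction, so $A$ differs from every $B_e$ and hence lies outside $\Sigma_{n\mapsto 2n+2}^0$.

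Finally—and this is the step demanding the most care—I would check $A \in \Sigma_{n \mapsto 2n+3}^0$, i.e.\ that membership of $n$ is decided by a c.e.\ operator with oracle $\emptyset^{(2n+2)}$, uniformly in $n$. Given $n$ and the oracle $\emptyset^{(2n+2)}$: if $n$ is even, accept; if $n = 2e+1$ is odd, recover $e$, compute the lower jump $\emptyset^{(2n+1)}$ from $\emptyset^{(2n+2)}$, and note that the predicate ``$n \in W_e^{\emptyset^{(2n+1)}}$'' is $\Sigma_1^{\emptyset^{(2n+1)}}$, so its negation is $\Pi_1^{\emptyset^{(2n+1)}}$ and is therefore \emph{decidable} relative to $(\emptyset^{(2n+1)})' = \emptyset^{(2n+2)}$. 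Thus with oracle $\emptyset^{(2n+2)}$ one can decide, uniformly in $n$, whether $n \notin W_e^{\emptyset^{(2n+1)}}$, and we let the operator accept $n$ exactly when this holds (turning the decision procedure into a c.e.\ operator). This witnesses $A \in \Sigma_{n\mapsto 2n+3}^0$.

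The main obstacle is precisely this last paragraph: one must confirm that a single diagonal flip costs exactly one Turing jump, so that the witness set lands in $\Sigma_{n\mapsto 2n+3}^0$ rather than higher up. The key observation is that querying $n \in W_e^{\emptyset^{(2n+1)}}$ is a lone $\Sigma_1$ question over $\emptyset^{(2n+1)}$, whose complement becomes decidable after exactly one further jump to $\emptyset^{(2n+2)}$—which is exactly the oracle level permitted by $c(n) = 2n+3$. Everything else is routine bookkeeping with the jump hierarchy.
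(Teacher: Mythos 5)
Your proposal is correct and is essentially identical to the paper's proof: the set you define, $\{2n : n\in\N\} \cup \{2e+1 : 2e+1 \notin W_e^{\emptyset^{(4e+3)}}\}$, is exactly the paper's choice of $A$ (the paper writes the oracle level as $4n+3 = 2(2n+1)+1$). Your verification of the diagonalization and of membership in $\Sigma_{n\mapsto 2n+3}^0$ via one extra jump is sound and simply spells out what the paper leaves implicit.
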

\begin{proof}
Choose 
\[
A := \{ 2n+1 ~:~ n \in \N,\,  2n+1 \notin W_n^{\emptyset^{(4n+3)}} \} \cup \{2n ~:~ n\in \N\}.
\]
\end{proof}

\section{Outline of the proof of Theorem \ref{main}}
\label{outline}
We start with an informal description: Let $X$ be a compact Polish space.
The idea is to encode subsets of $\N$ in the Cantor-Bendixson ranks
of points appearing in the space (which naturally leads to the appearance of the Feiner classes). 
The usual Cantor-Bendixson rank, however, is not suitable, basically because identifying isolated points
in computably compact spaces is not easier than in merely computable spaces. Computable compactness does, however, help
when it comes to identifying disjoint clopen subsets, in the sense that one quantifier change can be saved
 (compare Lemmas \ref{p2} and \ref{1proper} below). So we use a variant of the Cantor-Bendixson rank
 where a point has positive rank if it is a ``limit'' of clopen sets instead of isolated points. If we want to
 encode a set of natural numbers in these ranks, we have the problem that obviously the existence of a point
 of rank $n$ implies the existence of points of ranks $1, 2, \ldots, n-1$. So we need to ``mark'' the
 points the rank of which we actually want to include. We do this by placing them on the boundaries of closed
 intervals, or more generally by making them elements of connected components with nonempty interiors. 

For the formal treatment, call a point $x \in X$ \emph{proper} or \emph{$1$-proper} if every neighbourhood
of $x$ is intersected by infinitely many pairwise disjoint clopen subsets of $X$. 
For $n > 1$, call a point \emph{$n$-proper} if
every neighbourhood contains infinitely many $(n-1)$-proper points.
If there is an $n \in \N$ such that $x$ is $n$-proper but not $(n+1)$-proper, call $n$ the \emph{proper rank} of $x$. 

Consider a subset $Y \subseteq X$. If there is an $n$ such that $Y$ contains and $n$-proper point, but no $(n+1)$-proper
point, call $n$ the \emph{proper rank} of $Y$. The proper rank of $Y$ shall be $0$ if $Y$ does not contain any proper points. 

Call a connected component of $X$ \emph{$n$-distinguished} if its proper rank is $n$ and its interior in non-empty.
Define
\[
\rho(X) := \left\{ n \in \N ~:~ \textrm{$X$ has an $n$-distinguished component} \right\}.
\]
Clearly, $\rho$ is a topological invariant, i.e.\ $\rho(X_1) = \rho(X_2)$ if $X_1$ and $X_2$ are
homeomorphic.

Our proof will proceed by showing the following two propositions:

\begin{proposition}
\label{upperBound}
If $X$ is a computably compact Polish space then $\rho(X) \in \Sigma_{m \mapsto 2m + 2}^0$.
\end{proposition}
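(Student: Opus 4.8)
The plan is to rephrase membership in $\rho(X)$ entirely in terms of basic balls and clopen sets, so that computable compactness can be used to eliminate the otherwise problematic quantification over points of $X$. Write $S_n$ for the set of $n$-proper points. I would first record two elementary topological facts. Each $S_n$ is closed: if some neighbourhood of $x$ witnesses that $x$ is not $n$-proper, the same neighbourhood witnesses this for all nearby points; and the sets are nested, $S_1 \supseteq S_2 \supseteq \cdots$, because an $n$-proper point is a limit of $(n-1)$-proper points and $S_{n-1}$ is closed. Since $X$ is compact Hausdorff, the connected component of a point equals its quasicomponent, so membership in a common component is detected purely by clopen sets: two points lie in different components iff some clopen set contains one but not the other.

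Next I would show by induction that the predicate $D_n(B)$, namely ``$B$ contains an $n$-proper point'' (that is, $B \cap S_n \neq \emptyset$) for a basic ball $B$, lies in $\Pi^0_{2n}$. The base case is the content of Lemmas \ref{p2} and \ref{1proper}: $x$ is $1$-proper iff every neighbourhood of $x$ meets infinitely many pairwise disjoint clopen sets, and computable compactness renders the predicate ``there exist $k$ pairwise disjoint clopen sets each meeting $B$'' computably enumerable rather than merely $\Pi^0_1$. This is the promised saving of one quantifier change and places $D_1$ in $\Pi^0_2$. For the inductive step I would use that $x$ is $n$-proper iff every basic ball $B \ni x$ contains infinitely many $(n-1)$-proper points, rewritten combinatorially as: for every $k$ there exist pairwise disjoint basic balls $B_1,\dots,B_k \subseteq B$ with $D_{n-1}(B_i)$ for all $i$. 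As $D_{n-1} \in \Pi^0_{2(n-1)}$, the clause ``for all $k$ there exist such balls'' adds one $\forall\exists$ alternation and yields $\Pi^0_{2n}$, while the existence of the point $x$ in $B$ is converted into universal quantification over basic balls by the same compactness argument as in the base case, without raising the level. In particular, ``$B$ contains no $n$-proper point'' is $\Sigma^0_{2n}$.

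I would then translate the defining conditions of an $n$-distinguished component. Call a basic ball $B_0$ \emph{pure} if every clopen set either contains $B_0$ or is disjoint from it; purity is $\Pi^0_1$ and forces $B_0$ to lie in a single component $C$, so that $C$ has nonempty interior, with $C = \bigcap\{V : V \text{ clopen}, B_0 \subseteq V\}$. Because this family of clopen sets is directed and each $S_m$ is compact, a finite-intersection argument gives that $C \cap S_m = \emptyset$ iff some clopen $V \supseteq B_0$ already satisfies $V \cap S_m = \emptyset$, and by computable compactness such a $V$ is witnessed by a finite cover of $V$ by basic balls each containing no $m$-proper point. Thus ``$C$ avoids $S_{n+1}$'' is $\Sigma^0_{2n+2}$, while ``$C$ meets $S_n$'', being the negation of the level-$n$ avoidance statement, is $\Pi^0_{2n}$. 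Finally I would verify that $n \in \rho(X)$ iff there is a pure basic ball $B_0$ whose component meets $S_n$ and avoids $S_{n+1}$: any basic ball inside the interior of an $n$-distinguished component is pure and has that component, since a connected set is never split by a clopen set, and conversely a pure ball exhibits a component of nonempty interior and proper rank exactly $n$.

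Counting quantifiers, this characterization reads $\exists B_0\,[\,\Pi^0_1 \wedge \Pi^0_{2n} \wedge \Sigma^0_{2n+2}\,]$; since $\Pi^0_1$ and $\Pi^0_{2n}$ are contained in $\Sigma^0_{2n+2}$, the matrix is $\Sigma^0_{2n+2}$ and the leading existential keeps it there, giving $\rho(X) \in \Sigma_{m \mapsto 2m+2}^0$. I expect the main obstacle to be the inductive complexity bound of the second paragraph: making precise, at each level, the compactness argument that replaces the existence of a proper point by universal quantification over basic balls without increasing the quantifier count, while keeping the parity bookkeeping exact. It is this, together with the one-change saving at the base supplied by Lemmas \ref{p2} and \ref{1proper}, that lands the final class at $\Sigma^0_{2n+2}$ rather than one level higher.
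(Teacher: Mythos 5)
Your architecture is essentially the paper's: the $\Sigma^0_1$ detection of $k$ pairwise disjoint clopen sets meeting a ball (Lemmas \ref{infinitely}, \ref{useCompact}), one $\forall\exists$ block per properness level, quasicomponent $=$ component plus the finite intersection property (Fact \ref{quasi}, Lemma \ref{finiteIntersect}), and a final existential over witnesses of nonempty interior. Your ``pure ball'' device is a legitimate $\Pi^0_1$ alternative to the paper's Lemma \ref{inner} (which instead shows the distance from a special point to the complement of its component is right-c.e., giving a $\Sigma^0_2$ predicate); either costs nothing at the target level. The genuine gap is the central claim of your second paragraph: that $D_n(B)$, ``the \emph{open} ball $B$ contains an $n$-proper point'', lies exactly in $\Pi^0_{2n}$. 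Already at $n=1$ your rewriting fails as an equivalence: ``for every $k$ there exist $k$ pairwise disjoint clopen sets meeting $B$'' does not imply that $B$ contains a proper point, because the limit point produced by compactness may sit on the boundary of $B$. Concretely, in $X = \{0\} \cup \{1/k : k \geq 1\}$ the basic ball $B(1,1) = \{1/k : k \geq 1\}$ meets infinitely many pairwise disjoint clopen singletons, yet contains no proper point ($0$ is the unique proper point and $0 \notin B(1,1)$). The same defect recurs at every inductive level: your clause ``for every $k$ there exist disjoint basic balls $B_1, \ldots, B_k \subseteq B$ with $D_{n-1}(B_i)$'' is implied by ``$B$ contains an $n$-proper point'' but only implies that $\overline{B}$ contains one (compare Lemma \ref{infinitely2}, where (i) $\Rightarrow$ (iii) $\Leftrightarrow$ (iv) $\Rightarrow$ (v) and neither outer implication reverses). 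Your remark that ``the existence of the point $x$ in $B$ is converted into universal quantification over basic balls without raising the level'' names precisely the step that does not exist: the natural exact rewriting, $\exists B'$ formally enclosed in $B$ with $\overline{B'}$ meeting the $n$-proper points, costs an extra existential and lands at $\Sigma^0_{2n+1}$.

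The missing idea --- and what the paper actually does in Lemmas \ref{p2} and \ref{checkBall} --- is to abandon the exact predicate and work with a two-sided approximation: a $\Pi^0_{2n}$ set $A$ with ($B$ contains an $n$-proper point) $\Rightarrow$ $\langle B \rangle \in A$ $\Rightarrow$ ($\overline{B}$ contains an $n$-proper point). This sandwich is collapsed to an exact equivalence only where it is needed, at clopen sets: a clopen $V$ given as a finite union of basic balls is simultaneously the union of those open balls and of their closures, so ``some constituent ball is in $A$'' is exactly ``$V$ contains an $n$-proper point'' (Lemma \ref{prop}). Your third paragraph can absorb this repair: in the cover characterization of ``$V$ avoids the $m$-proper points'', replace ``each ball contains no $m$-proper point'' by the $\Sigma^0_{2m}$ condition $\langle B_i \rangle \notin A$; since a clopen $V$ disjoint from the (closed, hence compact) set of $m$-proper points has positive distance from it, one can cover $V$ by balls whose \emph{closures} avoid that set, so the equivalence is exact and your quantifier count is unchanged. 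Thus the skeleton of your argument stands, but as written the inductive bound is false as an equivalence, and the sandwich is a required idea rather than the ``parity bookkeeping'' you anticipated.
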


\begin{proposition}
\label{lowerBound}
For every $A \in \Sigma_{m \mapsto 2m + 3}^0$ with $\{2n ~:~ n\in \N\} \subseteq A$,
there is a perfect c.e.\ closed subset $K$ of $[0,1]$ such that $\rho(K) = A$. 
\end{proposition}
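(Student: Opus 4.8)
The plan is to assemble $K$ from countably many \emph{gadgets}, placed in pairwise disjoint subintervals $E_m$ of $[0,1]$ shrinking to $0$, where $E_m$ is responsible for the membership of $m$ in $A$. The basic building block is a sequence of pairwise separated nondegenerate closed intervals (the \emph{atoms}) accumulating at a point $p$: each atom is clopen in $K$ and, being a nondegenerate interval forming its own connected component, contains no proper points, so it is a rank-$0$ distinguished component (harmless, since $0\in A$), while $p$ becomes $1$-proper and non-isolated. Iterating — accumulating at $p$ a sequence of points each carrying a rank-$(n-1)$ gadget — yields by induction a point $p$ of proper rank exactly $n$ all of whose auxiliary nodes lie in singleton (hence non-distinguished) components. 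To \emph{mark} a rank $n$ I attach such a depth-$n$ gadget, from the right, to the right endpoint $b$ of a nondegenerate interval $I=[a,b]\subseteq E_m$; since $I$ is connected its interior points are never proper, so the component $I$ has proper rank exactly $n$ and nonempty interior, i.e.\ is $n$-distinguished. Perfectness is built in: atoms are intervals, structural nodes are limits of their subgadgets, and the $E_m$ accumulate at $0$.

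For the encoding I exploit the hypothesis $\{2n : n\in\N\}\subseteq A$ to confine all uncertainty to the odd numbers. For even $m$ I install an unconditional depth-$m$ gadget at $b_m$, producing an $m$-distinguished $I_m$ outright. For odd $m$ I first make $b_m$ unconditionally $m$-proper, and then attach a second, conditional \emph{boost} gadget at $b_m$ engineered so that $b_m$ becomes $(m+1)$-proper precisely when $m\notin A$. Since $m+1$ is then even and hence in $A$, the component $I_m$ is $m$-distinguished when $m\in A$ and $(m+1)$-distinguished (again a rank in $A$) when $m\notin A$, so no rank outside $A$ is ever produced, and conversely every $m\in A$ receives a distinguished component. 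The boost realizes the $\exists^\infty$-characterization of the class: since $\N\setminus A\in\Pi^0_{2m+3}=\Pi^0_{2(m+1)+1}$ we may write $m\notin A \Leftrightarrow (\exists^\infty y_{m+1})\cdots(\exists^\infty y_1)(\forall x)\,p(\langle m,y_{m+1},\ldots,y_1,x\rangle)$, and the depth-$(m+1)$ boost gadget accumulates at $b_m$ auxiliary points $r_{y_{m+1}}$ that are to be made $m$-proper; the inner $m$ levels of each $r_{y_{m+1}}$-gadget realize the blocks $\exists^\infty y_m,\ldots,\exists^\infty y_1$, and the innermost $\forall x$ governs whether a given atom is placed. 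Thus ``infinitely many $r_{y_{m+1}}$ are $m$-proper'' holds exactly when $m\notin A$, and, each $r_{y_{m+1}}$ being $m$-proper but not $(m+1)$-proper, $b_m$ then has proper rank exactly $m+1$.

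The crux, and the main obstacle, is to realize the innermost $\forall x$ inside a set that is only \emph{c.e.}\ closed — where points may be enumerated but never retracted — without corrupting any proper rank. I would run each innermost branch monotonically: keep placing a convergent sequence of atoms toward its intended limit node \emph{as long as} $p(\langle m,\vec{y},x\rangle)$ holds for every $x$ inspected so far, and stop the instant some $x$ refutes it. This is purely additive, so the set of placed rational-endpoint atoms is computably enumerable and $K$ is c.e.\ closed, while a refuted branch contributes only finitely many atoms and hence fails to make its parent proper, exactly encoding $\neg(\forall x)\,p$. The delicate part is that this ``place-unless-refuted'' geometry must leak nothing: one must verify that a refuted or partially built subgadget collapses to precisely the intended lower proper rank, that all its auxiliary nodes remain in singleton, non-distinguished components, and therefore that the only distinguished components of $K$ are the marked intervals $I_m$ and the rank-$0$ atoms. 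Carrying out this rank bookkeeping level by level — confirming that the nested $\exists^\infty$ semantics of proper rank matches the quantifier structure of $A$ with no spurious proper points — is where the real work lies; granting it, $\rho(K)=A$ follows.
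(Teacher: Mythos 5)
Your global architecture matches the paper's almost exactly (nested interval-sequence gadgets accumulating at a point, marking ranks by attaching gadgets to an endpoint of a nondegenerate interval, exploiting $\{2n : n\in\N\}\subseteq A$ so that a conditional boost from $m$ to $m+1$ lands on an even number, and the $\exists^\infty$-characterization of $\Pi^0_{2m+3}$). But the crux — the innermost c.e.\ mechanism, which you yourself flag as ``where the real work lies'' — is missing, and the two versions you sketch each fail, in a way that is fatal rather than cosmetic. Your reading ``the innermost $\forall x$ governs whether a given atom is placed'' (which is what your rank count, $r_{y_{m+1}}$ being $m$-proper, needs) is not realizable: in a c.e.\ closed set, placing an atom is a positive, irrevocable act, so placement cannot be conditioned on a $\Pi^0_1$ event. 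Your realizable fallback — ``keep placing a convergent sequence of atoms toward its intended limit node until refuted'' — makes a designated point $1$-proper iff the $\Pi^0_1$ condition $(\forall x)\,p$ holds; stacking $\exists^\infty$-levels on top then makes rank $n$ encode only a $\Pi^0_{2n-1}$ condition, so you are one $\exists^\infty$ block short of $\Pi^0_{2m+3}$. Worse, in this geometry each branch region (a nonempty finite or convergent union of atoms) is itself clopen, and infinitely many of them accumulate at every structural node; so the parent of the branches is \emph{unconditionally} proper one level up — your claim that a refuted branch ``fails to make its parent proper'' is false as stated — and this cascade forces every height-$h$ node to be $(h-1)$-proper outright. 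Chasing the indices through: for odd $m\in A$ the marked component would have rank $m+1$, never $m$, and for $m\notin A$ the boost lands at $m+2$ (odd, not guaranteed in $A$). If instead you re-index to keep the ranks landing correctly, your mechanism only handles $A\in\Sigma^0_{2m+1}\subseteq\Sigma^0_{2m+2}$, which Proposition \ref{upperBound} does not exclude, so no counterexample follows.

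The missing idea is the merging (gap-closing) trick of the paper's Lemma \ref{1proper}, which realizes a full $\Pi^0_3$ condition already at rank $1$ — the ``one quantifier change saved'' announced in Section \ref{outline}. All intervals $G_k$ are placed \emph{unconditionally}; what is conditional is an \emph{additive} act triggered by the $\Sigma^0_1$ event $(\exists \ell)\,\langle k,\ell\rangle\notin O$, namely filling the gap between $G_k$ and $G_{k+1}$, which fuses two clopen pieces into one. Thus the number of clopen sets accumulating at the apex can effectively be ``decremented'', and the apex is proper iff infinitely many gaps survive, i.e.\ iff $(\exists^\infty k)(\forall \ell)\,\langle k,\ell\rangle\in O$; when the condition fails cofinitely, the tail fuses into a single interval with the apex as an endpoint, which also eliminates the spurious-properness cascade at the bottom level. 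With this base, rank $n$ encodes $\Pi^0_{2n+1}$ (paper's Lemma \ref{blocks}), the boost to rank $m+1$ handles the $\Pi^0_{2m+3}$ condition $m\notin A$, and the gap to the $\Sigma^0_{2m+2}$ upper bound is exactly bridged. Your higher levels (unconditional nested accumulation) are fine and agree with the paper; replacing ``stop placing when refuted'' by ``fill the separating gap when refuted'' at the innermost level is the repair your proof needs.
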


For the proof of Theorem \ref{main} it is then sufficient to choose $A$ as in Lemma \ref{separate} and then
$K$ as in Proposition \ref{lowerBound}.

\section{Proof of Proposition \ref{upperBound}}

Throughout this section, we assume that $(X, d)$ is a computable Polish space with special points $(x_i)_i$. 

Call balls $B(x,r)$, $B(y,s)$ 
\emph{formally disjoint} if $d(x,y) > r+s$. Similarly, call $B(x,r)$ \emph{formally enclosed} in $B(y,s)$ if $y \in B(x,r)$ and $d(x,y) + r < s$.

We will repeatedly use the following 
\begin{fact}
\label{decomp}
If $X$ is computably compact, there is a computable enumeration of all decompositions of $X$ into finitely many disjoint clopen subsets.
The clopen sets in this enumeration are given as (encodings of) finite unions of basic balls. We furthermore have:
\begin{itemize}
\item[(i)] In each of the enumerated decompositions, the encoded balls contributing to different clopen sets are pairwise formally disjoint. 
\item[(ii)] For every $\varepsilon > 0$ every clopen decomposition appears in the enumeration in a representation with the radii of all encoded
      balls smaller than $\varepsilon$. (This means every decomposition will appear again and again with higher and higher
      resolution.)
\end{itemize}
\end{fact}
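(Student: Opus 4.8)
The plan is to generate decompositions directly from the computable covers supplied by computable compactness, partitioning the covering balls into clusters that are mutually far apart. Fix the computable function $f$ witnessing computable compactness, so that for each $n$ the balls $B(x_{i_1}, 2^{-n}), \ldots, B(x_{i_k}, 2^{-n})$ with $f(n) = (i_1, \ldots, i_k)$ cover $X$. For each $n$ and each partition of the index set $\{i_1, \ldots, i_k\}$ into groups $G_1, \ldots, G_p$, I would check --- which is decidable, as it only compares rational distances --- whether any two balls lying in different groups are formally disjoint. Whenever this holds, I output the decomposition given by the unions $U_l := \bigcup_{i \in G_l} B(x_i, 2^{-n})$. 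This is a computable enumeration, and property (i) holds by construction.

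To see that every output is a genuine clopen decomposition, note that each $U_l$ is open as a union of open balls; the cross-group formal disjointness forces the $U_l$ to be pairwise disjoint, while $\bigcup_l U_l = X$ because $f(n)$ is a cover. Hence each $U_l = X \setminus \bigcup_{l' \neq l} U_{l'}$ is also closed, so the $U_l$ form a partition of $X$ into clopen sets. This establishes soundness of the enumeration.

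The substantive direction is completeness: every clopen decomposition $X = C_1 \sqcup \cdots \sqcup C_p$ must appear. Here the $C_l$ are closed subsets of the compact space $X$, hence compact, and being pairwise disjoint they satisfy $\delta := \min_{l \neq l'} d(C_l, C_{l'}) > 0$. I would then fix any $n$ with $2^{-n} < \delta/2$. For such $n$, no covering ball $B(x_i, 2^{-n})$ can meet two different pieces, since two points of distinct pieces lying in a common such ball would be at distance less than $2^{-n+1} < \delta$; moreover the centre $x_i \in X$ lies in exactly one piece, namely the unique piece the ball meets. Assigning each ball to the group indexed by the piece containing its centre therefore yields a partition in which cross-group centres are at distance $\geq \delta > 2^{-n+1}$, so the required formal disjointness holds, and one checks that $U_l = C_l$ for every $l$ (the inclusion $C_l \subseteq U_l$ because each point of $C_l$ is covered by a ball whose centre must then lie in $C_l$, and $U_l \subseteq C_l$ because balls assigned to $G_l$ cannot reach any other piece). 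Thus the decomposition $\{C_l\}$ is produced at stage $n$.

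Finally, property (ii) is immediate from this analysis: the argument works for every $n$ with $2^{-n} < \min(\delta/2, \varepsilon)$, so each decomposition recurs in the enumeration with all radii below any prescribed $\varepsilon$. The main obstacle is the completeness step, and specifically the observation --- a Lebesgue-number phenomenon powered by the positive separation of disjoint compact sets --- that sufficiently fine covering balls cannot straddle the boundary between two pieces; this is precisely what lets us read off the decomposition from the covering $f(n)$ without having to separately semidecide whether a candidate family of balls covers $X$.
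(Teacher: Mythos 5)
Your proof is correct in substance, and it is essentially the standard argument behind the cited result: the paper gives no self-contained proof of Fact \ref{decomp}, instead citing \cite[Lemma~4.21, Thm.~1.1(ii)]{compcomp} for splits into two clopen pieces and remarking that the argument extends to finite decompositions and yields features (i) and (ii). What you wrote is precisely that extension, spelled out for $k$ pieces: soundness because cross-group formal disjointness forces the unions $U_l$ to be disjoint open sets covering $X$, hence clopen by complementation; completeness because the pieces of a clopen decomposition are disjoint compact sets with pairwise distance $\delta > 0$, so for $2^{-n} < \delta/2$ no covering ball can straddle two pieces and grouping balls by the piece containing the centre reproduces the decomposition exactly, with $U_l = C_l$. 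Since this works for every $n$ with $2^{-n} < \min(\delta/2, \varepsilon)$, property (ii) falls out of the same computation, which is exactly the ``higher and higher resolution'' phenomenon the Fact asserts.

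One step needs repair, though it does not endanger the proof. You claim the cross-group formal disjointness check ``is decidable, as it only compares rational distances.'' It is not: the radii are rational, but $d(x_i, x_j)$ is in general only a computable real, so the strict inequality $d(x_i, x_j) > 2^{-n+1}$ is merely semidecidable --- it is confirmed in finite time if and only if it is true, by approximating $d(x_i, x_j)$ to sufficient precision. Semidecidability is exactly what an enumeration needs: dovetail over all pairs of a stage $n$ and a partition of the index set of $f(n)$, running the finitely many disjointness verifications in parallel, and output a candidate decomposition only once all of them have been confirmed. Your completeness argument is unaffected, because there the cross-group centre distances are at least $\delta > 2^{-n+1}$, so every required strict inequality is true and its verification terminates; with ``decidable'' replaced by ``semidecidable plus dovetailing,'' the rest of your argument goes through unchanged.
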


Fact \ref{decomp} is proved in \cite[Lemma~4.21, Thm.~1.1(ii)]{compcomp} for splits of $X$ into two clopen subsets, 
but the argument easily extends to finite decompositions and also yields the additional features (i) and (ii).

\begin{lemma}
\label{infinitely}
Consider the following statements for a ball $B(x,r)$:
\begin{itemize}
    \item[(i)] $B(x,r)$ contains a proper point.
    \item[(ii)] $B(x,r)$ is intersected by infinitely many disjoint clopen subsets of $X$.
    \item[(iii)] For every $k \geq 1$ there are $k$ pairwise disjoint clopen subsets of $X$ that intersect $B(x,r)$.
    \item[(iv)] $\overline{B(x,r)}$ contains a proper point.
\end{itemize}
Then (i) $\Rightarrow$ (ii) $\Leftrightarrow$ (iii) $\Rightarrow$ (iv).
\end{lemma}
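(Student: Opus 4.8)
The plan is to prove the chain by handling the two directions of the equivalence and the two outer implications separately, treating $(iii)\Rightarrow(ii)$ as the real content. Write $U := B(x,r)$ and recall that $X$, being computably compact, is in particular compact. The implication $(i)\Rightarrow(ii)$ is immediate from the definition of a proper point: if $U$ contains a proper point $p$, then $U$ is itself an open neighbourhood of $p$, so by properness $U$ meets infinitely many pairwise disjoint clopen subsets of $X$, which is exactly $(ii)$. The implication $(ii)\Rightarrow(iii)$ is trivial, since from an infinite pairwise disjoint family meeting $U$ one may select any finite number of members. It thus remains to prove $(iii)\Rightarrow(ii)$ and, using it, $(iii)\Rightarrow(iv)$.

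For $(iii)\Rightarrow(ii)$ I would introduce the counting function
\[
n(C) := \sup\{\, k : \text{there are } k \text{ pairwise disjoint clopen } D \subseteq C \text{ with } D \cap U \neq \emptyset \,\} \in \N \cup \{\infty\}
\]
for clopen $C \subseteq X$, so that $(iii)$ says precisely $n(X) = \infty$, while $(ii)$ asks for a single infinite family, which is what I want to extract. Two elementary properties drive the argument: monotonicity ($E \subseteq D$ implies $n(E) \le n(D)$, since disjoint clopen subsets of $E$ meeting $U$ are in particular such subsets of $D$), and subadditivity over finite clopen partitions: if $X = E_1 \sqcup \cdots \sqcup E_s$ then $n(X) \le n(E_1) + \cdots + n(E_s)$. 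The latter follows by a pigeonhole assignment: given $N$ disjoint clopen sets meeting $U$, each meets $U$ inside at least one part $E_i$, so assigning it to such an $i$ yields at most $n(E_i)$ sets per fixed $i$. Using subadditivity together with compactness I would then locate a ``rich point'': if no point $z$ had the property that every clopen neighbourhood of $z$ has infinite $n$-value, then every $z$ would have a clopen neighbourhood $D_z$ with $n(D_z) < \infty$; these cover $X$, a finite subcover generates a finite clopen partition whose atoms each lie in some $D_z$ and hence have finite $n$-value, forcing $n(X) < \infty$ by subadditivity --- contradicting $(iii)$.

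The heart of the argument, and the step I expect to be the main obstacle, is then a peeling construction anchored at such a rich point $z$. Maintaining a clopen set $D \ni z$ with $n(D) = \infty$ (starting from $D = X$), I observe that $n(D) \ge 2$ provides two disjoint clopen subsets of $D$ meeting $U$, at least one of which, say $E$, avoids $z$; I record $E$ and replace $D$ by $D \setminus E$. The crucial point is that $D \setminus E$ is again a clopen neighbourhood of $z$, so richness of $z$ guarantees $n(D \setminus E) = \infty$, allowing the construction to continue indefinitely. The recorded sets are pairwise disjoint (each lies in the current $D$ and the next is removed from it) and each meets $U$, yielding the infinite family required for $(ii)$. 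The subtlety to get right is exactly that peeling off a clopen piece could in principle destroy the richness; anchoring at a fixed rich point and only removing pieces that avoid it is what prevents this.

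Finally, $(iii)\Rightarrow(iv)$ follows by combining $(iii)\Rightarrow(ii)$ with a short limit argument. Taking an infinite pairwise disjoint family $E_1, E_2, \ldots$ of clopen sets meeting $U$ from $(ii)$, I choose $p_i \in E_i \cap U \subseteq \overline{U}$; by compactness of $\overline{U}$ some subsequence $p_{i_j}$ converges to a point $z \in \overline{U}$. Any neighbourhood of $z$ contains all but finitely many $p_{i_j}$, hence meets infinitely many of the pairwise disjoint clopen sets $E_{i_j}$, so $z$ is a proper point lying in $\overline{U}$. This establishes $(iv)$ and completes the chain.
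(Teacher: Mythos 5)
Your proof is correct, but your treatment of the key implication (iii) $\Rightarrow$ (ii) takes a genuinely different route from the paper. The paper argues purely inside the Boolean algebra of clopen sets: from the finite families witnessing (iii) it forms the generated finite subalgebras, collects the atoms meeting $B(x,r)$, observes that these number at least $k$ at stage $k$ while refining one another, and extracts an infinite antichain from the resulting poset under $\supseteq$; since incomparable atoms of nested finite algebras are disjoint, that antichain is the desired infinite family. Notably, this uses no compactness, so the paper's (ii) $\Leftrightarrow$ (iii) holds in an arbitrary topological space. Your argument instead leans on compactness: the counting function $n(\cdot)$ with monotonicity and subadditivity over clopen partitions, the finite-subcover argument producing a ``rich'' point $z$ all of whose clopen neighbourhoods have infinite $n$-value, and the peeling construction anchored at $z$, removing only clopen pieces avoiding $z$ so that richness survives. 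All the details check out: the subadditivity proof via the traces $D \cap E_i$ is sound, every nonempty atom of the algebra generated by a finite subcover does lie in some cover element, and the recorded pieces $E_1, E_2, \ldots$ are pairwise disjoint nonempty clopen sets meeting $B(x,r)$. Since the lemma is only applied to computably compact spaces --- and the paper itself needs compactness for (ii) $\Rightarrow$ (iv), which you reproduce essentially as the paper's limit-point argument, merely routed through (iii) $\Rightarrow$ (ii) first --- your extra hypothesis costs nothing in context. What each approach buys: the paper's yields the stronger, compactness-free equivalence (ii) $\Leftrightarrow$ (iii), preferable if the lemma were ever reused outside compact spaces; yours avoids the somewhat delicate antichain extraction (the paper's claim that every non-maximal element of its poset has two incomparable successors is in fact slightly loose as stated, since a split atom may have only one piece meeting the ball) in favour of a self-contained localization-and-peeling argument whose correctness is easier to verify step by step.
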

\begin{proof}
``(i)$\Rightarrow$(ii)'': By definition.

``(ii)$\Rightarrow$(iii)'': Obvious.

``(ii)$\Rightarrow$(iv)'': 
Let $\{ C_i \}_{i \in I}$ by an infinite family of pairwise disjoint clopen sets that intersect $B(x,r)$: 
Choosing one point $y_i$ from each $C_i \cap B(x,r)$, the family $(y_i)_{i \in I}$
must, by compactness, have a limit point $z \in \overline{B(x,r)}$. Then $z$ is proper by construction. 

``(iii)$\Rightarrow$(ii)'': By assumption, there is a double sequence $(C_{k,i} ~:~ k \geq 1,\ 1\leq i \leq k)$ of clopen sets intersecting 
$B(x,r)$ and having $C_{k,i} \cap C_{k,j} = \emptyset$ for any $1\leq i < j \leq k$. 
For every $k$, define \[ C_k := \{ C_{j,i} ~:~ 1 \leq j \leq k,\, 1 \leq i \leq j \}. \] 
Recall that the clopen subsets form a Boolean algebra under the set operations. 
Let $\langle C_k \rangle$ be the Boolean subalgebra generated by $C_k$. 
Define $\tilde{C}_k$ as the set of all atoms of $\langle C_k \rangle$ that are subsets of $\bigcup C_k$ 
and intersect $B(x,r)$. Note that $|\tilde{C}_k| \geq k$. Furthermore, each $\tilde{C}_{k+1}$ is obtained from $\tilde{C}_k$
by adding some sets that are disjoint from $\bigcup \tilde{C}_k$ and by splitting some elements of $\tilde{C}_k$. 
So if we consider the infinite set $\tilde{C} := \bigcup_k \tilde{C}_k$ under the partial order $\supseteq$, 
every element is either maximal or has at least two incomparable successors. This easily implies that $\tilde{C}$
contains an infinite antichain. The elements of this antichain are the desired infinitely many disjoint clopen
sets intersecting $B(x,r)$. 
\end{proof}

\begin{lemma}
\label{useCompact}
Suppose $X$ is computably compact. Then
\[
\left\{ (n, r, k) \in \N \times \Q^{>0} \times \N ~:~  B(x_n, r) \textrm{ intersects at least $k$ disjoint clopen sets} \right\} \in \Sigma_1^0.
\]
\end{lemma}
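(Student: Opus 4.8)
The plan is to reduce the condition ``$B(x_n,r)$ intersects at least $k$ disjoint clopen sets'' to a search over the computable enumeration of clopen decompositions supplied by Fact~\ref{decomp}, using the fact that ``an open ball meets a finite union of basic balls'' is semidecidable. The two halves of the argument are a soundness direction (everything we enumerate really does witness $k$ disjoint clopen sets meeting the ball) and a completeness direction (every genuine witness is eventually found).

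First I would isolate the underlying semidecidability fact: for basic balls $B(x_n,r)$ and $B(x_m,s)$, the intersection $B(x_n,r)\cap B(x_m,s)$ is nonempty if and only if there is a special point $x_j$ with $d(x_n,x_j)<r$ and $d(x_m,x_j)<s$. The ``if'' direction is immediate, and the ``only if'' direction holds because the intersection is open and the special points are dense, so a nonempty intersection must contain one. Since $(a,b)\mapsto d(x_a,x_b)$ is computable and each comparison ``$d(x_a,x_b)<t$'' with $t\in\Q$ is $\Sigma_1^0$ (approximate the computable real and wait for an upper bound below $t$), the existence of such an $x_j$ is $\Sigma_1^0$, uniformly. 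Because every clopen piece $C$ appearing in the enumeration of Fact~\ref{decomp} is presented as a finite union $\bigcup_l B(x_{i_l},s_l)$ that equals $C$ as a subset of $X$, the predicate ``$B(x_n,r)$ meets $C$'' is then $\Sigma_1^0$ uniformly in the code for $C$.

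Next I would describe the enumeration procedure. On input $(n,r,k)$, dovetail over all decompositions $\{C_1,\ldots,C_m\}$ produced by Fact~\ref{decomp}; for each such decomposition run the $\Sigma_1^0$ tests ``$B(x_n,r)$ meets $C_j$'' in parallel over $j$, keep a running count of the confirmed pieces, and accept $(n,r,k)$ as soon as $k$ of these tests have succeeded within one and the same decomposition. Soundness is then immediate: the pieces of any enumerated decomposition are pairwise disjoint clopen subsets of $X$, so confirming that $k$ of them meet $B(x_n,r)$ exhibits exactly $k$ disjoint clopen sets intersecting the ball.

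Completeness is the step I expect to require the most care. Suppose $B(x_n,r)$ genuinely meets $k$ pairwise disjoint clopen sets $D_1,\ldots,D_k$. These need not cover $X$, so they are not themselves a decomposition; the fix is to adjoin the clopen complement and consider $\{D_1,\ldots,D_k,\,X\setminus\bigcup_i D_i\}$ (discarding the last piece if it is empty), which is a genuine finite clopen decomposition of $X$. By Fact~\ref{decomp} this decomposition appears in the enumeration, and when it does each $D_i$ is presented as a finite union of basic balls whose union is exactly $D_i$; hence for each of the $k$ pieces the test ``$B(x_n,r)$ meets $D_i$'' succeeds, and the procedure accepts. Thus the described procedure semidecides the set, which is therefore $\Sigma_1^0$.
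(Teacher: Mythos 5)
Your proposal is correct and follows essentially the same route as the paper: both reduce the problem to Fact~\ref{decomp}'s enumeration of clopen decompositions and semidecide ``$B(x_n,r)$ meets a clopen piece'' by searching for a special point witnessing the intersection. The only (immaterial) difference is that the paper merges the complement into the $k$-th set to obtain a decomposition into exactly $k$ pieces all meeting the ball, whereas you adjoin the complement as an extra piece and accept once at least $k$ pieces of a single enumerated decomposition test positive.
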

\begin{proof}
A basic ball $B(x_n, r)$ intersects $k$ disjoint clopen sets iff there is a decomposition of $X$ into $k$ clopen sets
each of which intersects $B(x_n, r)$. (In fact, if $C_1, \ldots, C_k$ are disjoint clopen sets intersecting $B(x_n, r)$ and $C := C_1 \cup \cdots \cup C_k$,
consider $C_1, \ldots, C_k \cup (X \setminus C)$.) The intersection of $B(x_n, r)$ with any clopen set is open; it is thus nonempty iff it contains a special point.
So $B(x_n, r)$ intersects $k$ disjoint clopen sets iff there is decomposition of $X$ into clopen $C_1, \ldots, C_k$ along with sepcial points
$x_{n_1}, \ldots, x_{n_k}$ with $x_{n_i} \in C_i \cap B(x_n, r)$. The lemma follows because we can enumerate all decompositions
of $X$ into $k$ clopen sets by Fact \ref{decomp}, and for each of these decompositions, we can enumerate the $k$-tuples of special points
where each point is in a different element of the decomposition and all points are in $B(x_n, r)$.
\end{proof}

\begin{lemma}
\label{p2}
Suppose $X$ is computably compact. There is a set $A \in \Pi_2^0$ such that for all $n \in \N$, $r \in \Q^{>0}$
\[
B(x_n, r) \textrm{ contains a proper point} \ \Rightarrow \ \langle n, r \rangle \in A \ \Rightarrow \ \overline{B(x_n, r)} \textrm{ contains a proper point}
\]
\end{lemma}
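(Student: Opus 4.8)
The plan is to name by $A$ precisely the set of (codes of) balls satisfying condition (iii) of Lemma \ref{infinitely}, and then to read the complexity bound straight off Lemma \ref{useCompact}. Concretely, I would set
\[
A := \left\{ \langle n, r\rangle : \text{for every } k \geq 1,\ B(x_n, r) \text{ intersects at least } k \text{ pairwise disjoint clopen subsets of } X \right\}.
\]

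First I would verify the two sandwiching implications from Lemma \ref{infinitely}. If $B(x_n, r)$ contains a proper point, then by (i)$\Rightarrow$(iii) the ball satisfies condition (iii), so $\langle n, r\rangle \in A$. Conversely, if $\langle n, r\rangle \in A$ then $B(x_n, r)$ satisfies (iii), and by (iii)$\Leftrightarrow$(ii) together with (ii)$\Rightarrow$(iv) the closure $\overline{B(x_n, r)}$ contains a proper point. This yields exactly the required chain of implications.

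It then remains to bound the complexity of $A$. By Lemma \ref{useCompact}, the relation
\[
S := \left\{(n, r, k) : B(x_n, r) \text{ intersects at least } k \text{ disjoint clopen sets} \right\}
\]
is $\Sigma_1^0$. Since
\[
\langle n, r\rangle \in A \ \Leftrightarrow \ (\forall k)\ (n, r, k) \in S,
\]
the set $A$ is obtained from a $\Sigma_1^0$ predicate by prefixing a single universal quantifier, hence $A \in \Pi_2^0$, as desired.

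I do not expect a genuine obstacle here, since the real content has been front-loaded into Lemmas \ref{infinitely} and \ref{useCompact}. The only point requiring care is the choice of condition (iii) --- rather than (i) or (iv) themselves --- as the ``middle'' predicate to define $A$: it is implied by the presence of a proper point in the open ball and in turn forces a proper point in the closure, while being an arithmetically tame $\forall k$-statement whose matrix is $\Sigma_1^0$. This last feature is exactly where the one-quantifier saving from computable compactness (advertised in the outline and supplied by Lemma \ref{useCompact} via Fact \ref{decomp}) is used.
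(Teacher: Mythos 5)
Your proposal is correct and matches the paper's proof, which is exactly the combination of Lemma \ref{infinitely} (via the implications (i)$\Rightarrow$(iii)$\Rightarrow$(iv)) with Lemma \ref{useCompact}, taking condition (iii) as the defining predicate for $A$ and noting that a universal quantifier over the $\Sigma_1^0$ relation of Lemma \ref{useCompact} yields $\Pi_2^0$. Your write-up simply spells out the details that the paper's one-line proof leaves implicit.
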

\begin{proof}
Combine Lemma \ref{infinitely}, implications (i)$\Rightarrow$(iii)$\Rightarrow$(iv), and Lemma \ref{useCompact}.
\end{proof}

\begin{lemma}
\label{infinitely2}
Consider the following statements for a ball $B(x,r)$ and $n>1$:
\begin{itemize}
    \item[(i)] $B(x,r)$ contains an $n$-proper point.
    \item[(ii)] $B(x,r)$ contains infinitely many $(n-1)$-proper points.
    \item[(iii)] For every $k$ there are $k$ formally disjoint basic
                 open balls formally enclosed in $B(x,r)$ each of which contains an $(n-1)$-proper point. 
    \item[(iv)] For every $k$ there are $k$ formally disjoint basic
                 closed balls formally enclosed in $B(x,r)$ each of which contains an $(n-1)$-proper point. 
    \item[(v)] $\overline{B(x,r)}$ contains an $n$-proper point.
\end{itemize}
Then (i) $\Rightarrow$ (ii) $\Leftrightarrow$ (iii) $\Leftrightarrow$ (iv) $\Rightarrow$ (v).
\end{lemma}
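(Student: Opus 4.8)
The plan is to follow the template of Lemma \ref{infinitely}, proving the chain (i)$\Rightarrow$(ii), (ii)$\Leftrightarrow$(iii), (iii)$\Leftrightarrow$(iv), and (iv)$\Rightarrow$(v). The implication (i)$\Rightarrow$(ii) is immediate from the definition of $n$-properness: if $p \in B(x,r)$ is $n$-proper then, as $B(x,r)$ is an open neighbourhood of $p$, it must contain infinitely many $(n-1)$-proper points.

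For (ii)$\Rightarrow$(iii) I would argue by approximation. Given infinitely many $(n-1)$-proper points in the open ball $B(x,r)$, fix $k$ and select $k$ distinct such points $p_1,\dots,p_k$. These have positive minimal pairwise distance $\delta>0$, and each lies at distance $\varepsilon_i := r - d(p_i,x) > 0$ from the boundary of $B(x,r)$. Around each $p_i$ I would then pick a special point $x_{m_i}$ with $d(x_{m_i},p_i)$ and a rational radius $s_i$ both small enough (say below $\min(\delta/4,\varepsilon_i/2)$) that simultaneously $p_i \in B(x_{m_i},s_i)$, the balls are pairwise formally disjoint, and each is formally enclosed in $B(x,r)$; density of the special points makes this possible. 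The converse (iii)$\Rightarrow$(ii) is in fact easier here than the analogous step of Lemma \ref{infinitely}: for each $k$ the $k$ formally disjoint enclosed balls contain $k$ distinct $(n-1)$-proper points of $B(x,r)$, distinctness being forced by disjointness, so $B(x,r)$ contains at least $k$ such points for every $k$, hence infinitely many. No antichain extraction is needed, because the objects being counted are already the $(n-1)$-proper points themselves rather than clopen sets.

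The equivalence (iii)$\Leftrightarrow$(iv) I would handle by exploiting the strict inequalities defining formal disjointness and formal enclosure, which leave room to perturb radii: shrinking each open radius slightly yields closed balls that still contain their $(n-1)$-proper points and remain formally disjoint and enclosed, while enlarging each closed radius slightly reverses the process. Finally, for (iv)$\Rightarrow$(v) I would first pass through (ii) to obtain infinitely many distinct $(n-1)$-proper points $p_1,p_2,\dots$ in $B(x,r)$; by compactness of $X$, and hence of the closed set $\overline{B(x,r)}$, this infinite set has a limit point $z \in \overline{B(x,r)}$. Since every neighbourhood of a limit point of an infinite set in a metric space contains infinitely many points of that set, every neighbourhood of $z$ contains infinitely many $(n-1)$-proper points, so $z$ is $n$-proper, giving (v).

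I expect the only real obstacle to be bookkeeping in (ii)$\Rightarrow$(iii) and (ii)$\Rightarrow$(iv): one must choose the special-point centres and rational radii so that containment $p_i \in B(x_{m_i},s_i)$, formal disjointness, and formal enclosure in $B(x,r)$ all hold at once. This reduces to making every approximation error small relative to $\delta$ and the $\varepsilon_i$, so no genuine difficulty arises. The single topological input is compactness, used exactly once in the limit-point argument for (iv)$\Rightarrow$(v), mirroring the (ii)$\Rightarrow$(iv) step of Lemma \ref{infinitely}.
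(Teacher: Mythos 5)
Your proposal is correct and takes essentially the same route as the paper, which simply declares (ii)$\Rightarrow$(iii) and (ii)$\Rightarrow$(iv) obvious, proves (iii)$\Rightarrow$(ii) and (iv)$\Rightarrow$(ii) ``by contradiction'' (the contrapositive of your distinct-points count), and obtains (v) from (ii) by the identical compactness limit-point argument. The only cosmetic difference is that you prove (iii)$\Leftrightarrow$(iv) directly by perturbing radii while the paper routes both equivalences through (ii); your observation that no antichain extraction is needed here, unlike in Lemma \ref{infinitely}, is precisely why the paper can dismiss these steps as easy.
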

\begin{proof}
``(i)$\Rightarrow$(ii)'': By definition.

``(ii)$\Rightarrow$(iii)'' and ``(ii)$\Rightarrow$(iv)'': Obvious.

``(ii)$\Rightarrow$(v)'': By compactness, the $(n-1)$-proper points must have a limit point $z$ in $\overline{B(x,r)}$. 
Then $z$ is $n$-proper.

``(iii)$\Rightarrow$(ii)'' and ``(iv)$\Rightarrow$(ii)'': Easily proved by contradiction. 
\end{proof}

\begin{lemma}
\label{checkBall}
Suppose $X$ is computably compact. There exists a set $A \in \Pi_{\langle m, r, n \rangle \mapsto 2n}^0$ such that for $m \in \N$, $n\geq 1$, $r \in \Q^{>0}$:
\[
B(x_m, r) \textrm{ contains an $n$-proper point} \ \Rightarrow \ \langle m, r, n \rangle \in A \ \Rightarrow \ \overline{B(x_m, r)} \textrm{ contains an $n$-proper point}.
\]
\end{lemma}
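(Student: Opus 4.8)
The plan is to prove the statement by induction on $n$, producing for each $n \ge 1$ a set $A_n \in \Pi_{2n}^0$ that sandwiches ``$B(x_m,r)$ contains an $n$-proper point'' and ``$\overline{B(x_m,r)}$ contains an $n$-proper point'', in a way that is uniform in $n$; the desired $A \in \Pi_{\langle m,r,n\rangle \mapsto 2n}^0$ is then the set whose $n$-th slice is $A_n$. For the base case $n=1$ I would take $A_1$ to be exactly the set furnished by Lemma \ref{p2}, which lies in $\Pi_2^0 = \Pi_{2\cdot 1}^0$.

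For the inductive step, assume $A_{n-1} \in \Pi_{2n-2}^0$ realizes the sandwich for $(n-1)$-proper points. Guided by Lemma \ref{infinitely2}, I would define
\[
\langle m, r, n\rangle \in A_n \ :\Longleftrightarrow \ \forall k\ \exists (m_1,r_1),\dots,(m_k,r_k)
\]
such that the balls $B(x_{m_i},r_i)$ are pairwise formally disjoint, each is formally enclosed in $B(x_m,r)$, and $\langle m_i, r_i, n-1\rangle \in A_{n-1}$ for each $i$. (The existential over the whole tuple is a single number quantifier via the tupling function.) To check the forward implication, suppose $B(x_m,r)$ contains an $n$-proper point. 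By Lemma \ref{infinitely2}, implications (i)$\Rightarrow$(ii)$\Rightarrow$(iii), for every $k$ there are $k$ formally disjoint basic open balls formally enclosed in $B(x_m,r)$ each containing an $(n-1)$-proper point; the forward direction of the inductive hypothesis places each such ball's index in $A_{n-1}$, so $\langle m,r,n\rangle \in A_n$.

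For the backward implication, suppose $\langle m,r,n\rangle \in A_n$. For each $k$ the witnessing balls $B(x_{m_i},r_i)$ have indices in $A_{n-1}$, so by the backward direction of the inductive hypothesis each $\overline{B(x_{m_i},r_i)}$ contains an $(n-1)$-proper point. The key observation is that formal disjointness and formal enclosure are defined purely through centers and radii, and are therefore inherited verbatim by the closed balls $\overline{B(x_{m_i},r_i)}$; these hence witness condition (iv) of Lemma \ref{infinitely2} for $B(x_m,r)$, and (iv)$\Rightarrow$(v) gives that $\overline{B(x_m,r)}$ contains an $n$-proper point. This clean matching of open and closed balls through the formal predicates is what makes the single definition serve both directions of the sandwich.

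It remains to track complexity. The predicates ``formally disjoint'' and ``formally enclosed'' are $\Sigma_1^0$, being strict comparisons between the computable reals $d(x_i,x_j)$ and rationals; conjoined with the finite conjunction $\bigwedge_i [\langle m_i,r_i,n-1\rangle \in A_{n-1}]$, which is $\Pi_{2n-2}^0$, the matrix is $\Pi_{2n-2}^0$ (using $\Sigma_1^0 \subseteq \Pi_2^0 \subseteq \Pi_{2n-2}^0$ for $n\ge 2$ and closure under intersection). The single existential block then yields $\Sigma_{2n-1}^0$ and the outer $\forall k$ yields $\Pi_{2n}^0$. Since the construction is uniform in $n$, the quantifier-block characterization of the Feiner classes recalled in Section \ref{feiner} gives $A \in \Pi_{\langle m,r,n\rangle \mapsto 2n}^0$. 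I expect the only delicate point to be precisely this bookkeeping together with the open/closed matching in the backward step; once the formal predicates are seen to transfer to closures, the rest is routine quantifier counting.
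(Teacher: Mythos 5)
Your proposal is correct and takes essentially the same approach as the paper: base case from Lemma \ref{p2}, induction via Lemma \ref{infinitely2} using (i)$\Rightarrow$(iii) in the forward direction and (iv)$\Rightarrow$(v) in the backward direction, with the sandwich working precisely because formal disjointness and formal enclosure depend only on centers and radii and thus transfer verbatim between open balls and their closures. The only cosmetic difference is packaging: the paper builds a single recursive computable predicate $p_3$, absorbing the $\Sigma_1^0$ formal-ball conditions into a computable enumeration $g$ of admissible tuples and handling uniformity explicitly via variable-splitting and Tarski--Kuratowski prenexing, whereas you keep those conditions in the matrix and dispose of them by class-level bookkeeping --- the resulting $(\forall\exists)^n$ prefix and the uniformity-in-$n$ argument are the same.
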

\begin{proof}
We need to describe how to compute a predicate $p_3$ such that the set $A$ with
\[
\langle m, r, n \rangle \in A \ \Leftrightarrow \ (\forall k_n)(\exists \ell_n) \cdots (\forall k_1)(\exists \ell_1) \ p_3(\langle m, r, n, k_n, \ell_n, \ldots, k_1, \ell_1 \rangle)
\]
has the desired properties.
By Lemma \ref{p2} there is a computable predicate $p_2$ such that 
\begin{equation}
\label{p2base}
B(x_m, r) \textrm{ contains a proper point} \ \Rightarrow \ (\forall k)(\exists \ell)\ p_2(\langle m, r, k, \ell \rangle) \ \Rightarrow \ \overline{B(x_m, r)} \textrm{ contains a proper point}
\end{equation}
for any $m \in \N$, $r \in \Q^{>0}$.
We obtain $p_3$ by a straighforward recursive construction over $p_2$ using the characterization from Lemma \ref{infinitely2}:
Let some input $\langle m, r, n, k_n, \ell_n, \ldots, k_1, \ell_1 \rangle$ be given. 
In case $n=1$, return $p_2(\langle m, r, k_1, \ell_1 \rangle)$. 
In case $n>1$, we make use of a computable function $g$ such that $(g(m, r, k,\ell))_\ell$ is an enumeration of 
(encodings of) all $k$-tuples of formally disjoint basic balls formally enclosed in $B(x_m,r)$. Compute
\[
g(m, r, k_n,\ell_n) =: \left( B(x_{i_j}, t_j) \right)_{1\leq j \leq k_n}
\]
and return the conjunction 
\[
\bigwedge_{1\leq j \leq k_n} p_3(\langle i_j, t_j, n-1, k_{n-1}, \ell_{n-1,j}, \ldots, k_1, \ell_{1,j}  \rangle),
\]
where we have split $\ell_{v}$ into $k_n$ variables via the decoding $\langle \ell_{v,1}, \ldots,  \ell_{v,k_n} \rangle_{k_n} = \ell_{v}$
for $v = n-1, \ldots, 1$. 

The verification is by induction on $n$. The base case $n=1$ is yielded directly by \eqref{p2base}. For $n>1$:
\begin{equation*}
\begin{array}{rl}
& B(x_m, r) \textrm{ contains an $n$-proper point} \\
& \Rightarrow \ (\forall k_n)(\exists \ell_n) [ g(m, r, k_n, \ell_n) = \left( B(x_{i_j}, t_j) \right)_{1\leq j \leq k_n} \\ & \qquad\qquad\qquad\qquad \Rightarrow \, \bigwedge_{1\leq j \leq k_n} (\textrm{$B(x_{i_j}, t_j)$ contains an $(n-1)$-proper point}) ] \\
& \Rightarrow \ (\forall k_n)(\exists \ell_n) [ g(m, r, k_n, \ell_n) = \left( B(x_{i_j}, t_j) \right)_{1\leq j \leq k_n} \\ & \qquad\qquad\qquad\qquad \Rightarrow \, \bigwedge_{1\leq j \leq k_n} (\forall k_{n-1})(\exists \ell_{n-1}) \cdots (\forall k_1)(\exists \ell_1) \ p_3(\langle i_j, t_j, n-1, k_{n-1}, \ell_{n-1}, \ldots, k_1, \ell_1 \rangle) ] \\
& \Rightarrow \ (\forall k_n)(\exists \ell_n) [ g(m, r, k_n, \ell_n) = \left( B(x_{i_j}, t_j) \right)_{1\leq j \leq k_n} \\ & \qquad\qquad\qquad\qquad \Rightarrow \, \bigwedge_{1\leq j \leq k_n} (\textrm{$\overline{B(x_{i_j}, t_j)}$ contains an $(n-1)$-proper point}) ] \\
& \Rightarrow \ \overline{B(x_m, r)} \textrm{ contains an $n$-proper point}
\end{array}
\end{equation*}
The first implication is Lemma \ref{infinitely2}(i)$\Rightarrow$(iii), the second and third are the induction hypothesis, the fourth is 
Lemma \ref{infinitely2}(iv)$\Rightarrow$(v). To see that the third statement is in fact equivalent to
\[
(\forall k_n)(\exists \ell_n) \cdots (\forall k_1)(\exists \ell_1) \ p_3(\langle m, r, n, k_n, \ell_n, \ldots, k_1, \ell_1 \rangle),
\]
and hence to $\langle m, r, n \rangle \in A$,
move all quantifiers to the left using the Tarski-Kuratowski algorithm \cite{rogers} and compare to the recursive definition of $p_3$. 
\end{proof}

Recall that the \emph{quasi-component} of a point is the intersection of all clopen sets that contain the point.
The next lemmas make use of the following fact about compact spaces (see e.g.\ \cite[Theorem~6.1.23]{engelking}):

\begin{fact}
\label{quasi}
The quasi-component of any $x \in X$ is equal to its connected component. 
\end{fact}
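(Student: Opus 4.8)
The plan is to prove the two inclusions separately, writing $Q(x)$ for the quasi-component and $C(x)$ for the connected component of $x$. The inclusion $C(x) \subseteq Q(x)$ holds in any topological space and needs no compactness: if $U$ is any clopen set containing $x$, then $\{\, U \cap C(x),\ (X \setminus U) \cap C(x) \,\}$ partitions the connected set $C(x)$ into two relatively clopen pieces, so one of them is empty; since $x \in U \cap C(x)$, the other piece is empty and $C(x) \subseteq U$. Intersecting over all clopen $U \ni x$ yields $C(x) \subseteq Q(x)$.

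For the reverse inclusion it suffices to show that $Q(x)$ is itself connected, since then $Q(x)$ is a connected set containing $x$ and hence is contained in the maximal such set $C(x)$. This is where compactness (together with the Hausdorff property, which the metrizable $X$ of course has) enters. I would first note that $Q(x)$ is closed, being an intersection of clopen sets. Suppose toward a contradiction that $Q(x) = A \cup B$ with $A, B$ disjoint, nonempty, and closed in $Q(x)$, and with $x \in A$; since $Q(x)$ is closed in the compact Hausdorff space $X$, both $A$ and $B$ are closed in $X$, hence compact. By normality of $X$ I can separate them by disjoint open sets $U \supseteq A$ and $V \supseteq B$.

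The key step is then a compactness argument producing a single clopen set that separates $A$ from $B$. Writing $Q(x) = \bigcap_i F_i$ as the intersection of all clopen $F_i \ni x$, we have $\bigcap_i F_i \subseteq U \cup V$, so the compact set $X \setminus (U \cup V)$ is missed by the intersection. By the finite intersection property, finitely many of the $F_i$ already satisfy $F := F_{i_1} \cap \cdots \cap F_{i_k} \subseteq U \cup V$, and this $F$ is clopen and contains $x$. The crucial observation is that $F \cap U = F \setminus V = F \cap (X \setminus V)$, using $F \subseteq U \cup V$ and $U \cap V = \emptyset$: the first description exhibits $F \cap U$ as open and the last as closed, so $F \cap U$ is a clopen set containing $x$. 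Hence $Q(x) \subseteq F \cap U \subseteq U$, which forces $B \subseteq Q(x) \cap V \subseteq U \cap V = \emptyset$, contradicting $B \neq \emptyset$. Thus $Q(x)$ is connected, and the two inclusions combine to give equality.

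The main obstacle, and the only place the hypotheses are genuinely used, is the passage from the separating open sets $U, V$ (which normality always supplies) to an honest clopen separator $F \cap U$. This needs both the compactness reduction to a single clopen $F$ with $F \subseteq U \cup V$ and the small algebraic trick showing that $F \cap U$ is simultaneously open and closed; without compactness one cannot in general upgrade a neighbourhood separation to a clopen one, which is exactly why the statement fails outside the compact setting.
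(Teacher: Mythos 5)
Your proof is correct and complete; it is the standard argument for the coincidence of quasi-components and components in compact Hausdorff spaces, which the paper does not prove itself but outsources to Engelking \cite[Theorem~6.1.23]{engelking} --- your write-up is essentially the proof found there. In particular, the key compactness step (shrinking the intersection of clopen sets to a single clopen $F \subseteq U \cup V$ via the finite intersection property, then observing that $F \cap U = F \setminus V$ is clopen) is exactly right, and you correctly isolate it as the only place where compactness is genuinely used.
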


\begin{lemma}
\label{finiteIntersect}
The connected component of a point $x \in X$ contains an $n$-proper point iff every clopen set containing $x$ contains an $n$-proper point.
\end{lemma}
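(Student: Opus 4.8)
The plan is to identify the connected component of $x$ with its quasi-component via Fact~\ref{quasi} and then reduce the nontrivial direction to a compactness argument. Write $\mathcal{C}$ for the family of all clopen sets containing $x$ and set $Q := \bigcap \mathcal{C}$; by Fact~\ref{quasi} this is the connected component of $x$. The forward direction is then immediate: since $Q \subseteq C$ for every $C \in \mathcal{C}$, any $n$-proper point $y \in Q$ satisfies $y \in C$, so each clopen $C \ni x$ contains an $n$-proper point.

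For the converse I would argue by compactness, and the crucial preliminary observation — which I expect to be the main (and essentially only nonroutine) step — is that the set $P$ of all $n$-proper points of $X$ is closed. I would prove this directly from the definition, uniformly in $n$: if $x \in \overline{P}$ and $U$ is any neighbourhood of $x$, choose $y \in U \cap P$; since $U$ is also a neighbourhood of $y$, applying the definition of $n$-propriety at $y$ shows that $U$ contains infinitely many $(n-1)$-proper points (for $n = 1$, that $U$ meets infinitely many pairwise disjoint clopen sets). As $U$ was arbitrary, $x$ is itself $n$-proper, so $P = \overline{P}$. The reason this goes through is precisely that $n$-propriety is a condition imposed on \emph{every} neighbourhood of the point, so it passes to limit points.

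Granting that $P$ is closed — hence compact, as $X$ is compact — I would finish via the finite intersection property. Each $C \cap P$ with $C \in \mathcal{C}$ is compact and, by the hypothesis of the ``if'' direction, nonempty. Moreover the family $\{\, C \cap P : C \in \mathcal{C} \,\}$ has the finite intersection property, because $\mathcal{C}$ is closed under finite intersections and so any finite subintersection equals $(C_1 \cap \cdots \cap C_m) \cap P$ with $C_1 \cap \cdots \cap C_m \in \mathcal{C}$. Compactness then gives $Q \cap P = \bigcap_{C \in \mathcal{C}} (C \cap P) \neq \emptyset$, which is exactly the statement that the connected component $Q$ of $x$ contains an $n$-proper point.
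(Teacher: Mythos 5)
Your proof is correct and takes essentially the same route as the paper: both directions go through Fact~\ref{quasi}, and the ``if'' direction is the finite intersection property applied to the family $\{\,C \cap P : C \in \mathcal{C}\,\}$, resting on the closedness of the set $P$ of $n$-proper points. The only difference is that the paper merely asserts this closedness while you supply the (correct) limit-point argument; the single cosmetic point there is to take $U$ open (as one may, since propriety need only be checked on open neighbourhoods) so that $U$ is genuinely a neighbourhood of the chosen $y \in U \cap P$.
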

\begin{proof}
The ``only if'' direction follows directly from Fact \ref{quasi}. For the ``if'' direction, let $C$ be the connected component of $x$ and
let $A$ be the set of $n$-proper points in $X$. The claim is trivial for $A = \emptyset$, so assume $A \neq \emptyset$. Note the $A$ is closed. 
Let $\{C_i\}_{i\in I}$ be the family of all clopen sets containing $x$. For any $i_1, \ldots i_n$, we have that $C_{i_1} \cap \cdots \cap C_{i_n}$ is 
clopen and contains $x$, so $C_{i_1} \cap \cdots \cap C_{i_n} \cap A \neq \emptyset$ by assumption. Thus the family
$\{ C_i \cap A \}$ has the Finite Intersection Property (see \cite[Theorem~3.1.1]{engelking}) which implies
\[
\emptyset \neq \bigcap_{i\in I} (C_i \cap A) = A \cap \bigcap_{i\in I} C_i = A \cap C.
\]
\end{proof}

\begin{lemma}
\label{prop}
Assume that $X$ is computably compact. Then
\[
\textrm{Prop}(n,m) := \left\{ \langle n, m \rangle ~:~ \textrm{the connected component of $x_n$ contains an $m$-proper point}  \right\} \in \Pi_{\langle n,m \rangle \mapsto 2m}^0
\]
\end{lemma}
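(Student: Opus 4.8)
The plan is to reduce the statement about the connected component of $x_n$ to a statement about the enumerable clopen decompositions of $X$, and then to read off the complexity from Lemma~\ref{checkBall}. The first step uses Lemma~\ref{finiteIntersect}: the connected component of $x_n$ contains an $m$-proper point if and only if \emph{every} clopen set containing $x_n$ contains an $m$-proper point. This is the step that cashes in computable compactness, since clopen sets (unlike isolated points) can be enumerated. Concretely, by Fact~\ref{decomp} there is a computable enumeration $D_0, D_1, \ldots$ of all finite clopen decompositions of $X$, each piece presented as a finite union of basic balls, with balls in different pieces formally disjoint. Every clopen $C \ni x_n$ is the piece containing $x_n$ in the two-element decomposition $\{C, X \setminus C\}$, which occurs in the enumeration, and conversely every piece is clopen; hence
\[
\langle n, m\rangle \in \textrm{Prop} \iff \textrm{for every $s$, the piece $C$ of $D_s$ with $x_n \in C$ contains an $m$-proper point.}
\]

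The crux is to replace ``$C$ contains an $m$-proper point'' by a $\Pi^0_{2m}$ condition, and here the \emph{clopenness} of $C$ is decisive. Writing $C = B(x_{i_1}, t_1) \cup \cdots \cup B(x_{i_l}, t_l)$ and letting $A \in \Pi^0_{\langle \cdot, \cdot, m\rangle \mapsto 2m}$ be the set from Lemma~\ref{checkBall}, I claim
\[
C \textrm{ contains an $m$-proper point} \iff \bigvee_{j=1}^{l} \langle i_j, t_j, m\rangle \in A.
\]
For the forward direction, an $m$-proper point of $C$ lies in some open ball $B(x_{i_j}, t_j)$, so $\langle i_j, t_j, m\rangle \in A$ by the first implication of Lemma~\ref{checkBall}. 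For the backward direction, $\langle i_j, t_j, m\rangle \in A$ yields an $m$-proper point in $\overline{B(x_{i_j}, t_j)}$ by the second implication; but $B(x_{i_j}, t_j) \subseteq C$ and $C$ is closed, so $\overline{B(x_{i_j}, t_j)} \subseteq \overline{C} = C$, and that point lies in $C$. Thus the one-quantifier gap in the sandwich of Lemma~\ref{checkBall} is absorbed precisely because passing to the closure of a ball stays inside the clopen piece. This collapse of the sandwich into an exact equivalence is the step I expect to be the main obstacle to get right; everything after it is bookkeeping.

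It then remains to assemble the formula and count quantifiers. Using that $x_n \in C$ means $\exists j\,(d(x_n, x_{i_j}) < t_j)$, the characterization becomes
\[
\langle n, m\rangle \in \textrm{Prop} \iff \forall s \bigwedge_{\textrm{pieces } C \textrm{ of } D_s} \bigwedge_{j} \Bigl[ d(x_n, x_{i_j}) < t_j \Rightarrow \bigvee_{j'} \langle i_{j'}, t_{j'}, m\rangle \in A \Bigr],
\]
where the inner conjunctions are bounded and computable from $s$. The antecedent $d(x_n, x_{i_j}) < t_j$ is $\Sigma^0_1$, so each bracket has the form $\Pi^0_1 \vee \Pi^0_{2m}$, which for $m \geq 1$ is $\Pi^0_{2m}$; the bounded conjunctions preserve $\Pi^0_{2m}$, and the outer $\forall s$ merges with the leading $\forall$-block of the $\Pi^0_{2m}$-formula defining $A$, leaving the number of quantifier blocks at $2m$. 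All of these padding, union, and prefix-merging operations are uniform in $m$, exactly as the classical arithmetical constructions are uniform in the number of quantifiers (as recalled in Section~\ref{feiner}). Hence $\textrm{Prop} \in \Pi^0_{\langle n, m\rangle \mapsto 2m}$. The degenerate case $m = 0$ is trivial, a connected component always being nonempty.
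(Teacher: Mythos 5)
Your proof is correct and takes essentially the same route as the paper's: Lemma~\ref{finiteIntersect} to reduce to the condition that every clopen set containing $x_n$ contains an $m$-proper point, Fact~\ref{decomp} to enumerate these clopen sets, and the collapse of the sandwich from Lemma~\ref{checkBall} into an exact equivalence because the closure of each ball stays inside the closed (clopen) piece. The only cosmetic difference is that the paper extracts an enumeration $(C_{n,k})_k$ of the clopen sets containing $x_n$ directly, whereas you keep the full decomposition enumeration and handle piece-membership via the $\Sigma^0_1$ guard $d(x_n, x_{i_j}) < t_j$ inside the formula; the quantifier count works out the same either way.
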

\begin{proof}
As a consequence of Fact \ref{decomp}, there is an enumeration $(C_{n,k})_{n,k}$ such that $(C_{n,k})_k$ lists all clopen sets containing $x_n$, each as a finite union of basic balls. 
$C_{n,k}$ contains an $m$-proper point iff one of the open balls contains an $m$-proper point, or, equivalently, if the closure of one of the balls contains an
$m$-proper point (because $C_{n,k}$ as a closed set is also equal to the union of the closures of the balls). Lemma \ref{checkBall} thus yields
\[
\left\{ \langle n, m, k \rangle ~:~ C_{n,k} \textrm{ contains an $m$-proper point} \right\} \in \Pi_{\langle n,m,k \rangle \mapsto 2m}^0.
\]
By Lemma \ref{finiteIntersect}, the intersection of these sets over $k$ is equal to $\textrm{Prop}(n,m)$. The intersection does
not change the quantifier complexity. 
\end{proof}

\begin{lemma}
\label{proprank}
Assume that $X$ is computably compact. Then
\[
\textrm{PropRank} := \left\{ \langle n, m \rangle ~:~ \textrm{the connected component of $x_n$ has proper rank $m$}  \right\} \in \Sigma_{\langle n,m \rangle \mapsto 2m + 2}^0
\]
\end{lemma}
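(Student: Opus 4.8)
The plan is to express $\textrm{PropRank}$ as a Boolean combination of the set $\textrm{Prop}$ supplied by Lemma \ref{prop} and then read off the resulting quantifier complexity. Unfolding the definition of the proper rank of a set, the connected component $C$ of $x_n$ has proper rank $m \geq 1$ exactly when $C$ contains an $m$-proper point but no $(m+1)$-proper point, while $C$ has proper rank $0$ exactly when $C$ contains no proper (i.e.\ $1$-proper) point. With $\textrm{Prop}(n,m)$ denoting ``the connected component of $x_n$ contains an $m$-proper point'', this gives for $m \geq 1$
\[
\langle n, m\rangle \in \textrm{PropRank} \ \Leftrightarrow \ \textrm{Prop}(n,m) \ \wedge \ \neg\,\textrm{Prop}(n,m+1),
\]
and $\langle n, 0\rangle \in \textrm{PropRank} \Leftrightarrow \neg\,\textrm{Prop}(n,1)$. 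To handle both cases by one construction I would use the equivalent formula
\[
\langle n, m\rangle \in \textrm{PropRank} \ \Leftrightarrow \ \bigl[\, m = 0 \ \vee \ \textrm{Prop}(n,m)\,\bigr] \ \wedge \ \neg\,\textrm{Prop}(n,m+1),
\]
where for $m = 0$ the decidable disjunct makes the bracket true and the formula collapses to $\neg\,\textrm{Prop}(n,1)$, while for $m \geq 1$ the disjunct $m=0$ is false and the bracket reduces to $\textrm{Prop}(n,m)$; in particular only proper ranks $\geq 1$ are ever referenced, so the undefined notion of a ``$0$-proper point'' never arises.

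Next I would feed in the bound of Lemma \ref{prop}, namely $\textrm{Prop} \in \Pi_{\langle n,m\rangle \mapsto 2m}^0$. Composing with the computable shift $\langle n,m\rangle \mapsto \langle n,m+1\rangle$ puts $\textrm{Prop}(n,m+1)$ in $\Pi_{2m+2}^0$, so its complement $\neg\,\textrm{Prop}(n,m+1)$ lies in $\Sigma_{2m+2}^0$. The bracket $m=0 \vee \textrm{Prop}(n,m)$ lies in $\Pi_{2m}^0$, since adjoining a decidable set leaves the complexity unchanged, and $\Pi_{2m}^0 \subseteq \Sigma_{2m+2}^0$ by adjoining two dummy quantifiers (one leading $\exists$ and one trailing $\forall$). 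A conjunction of two $\Sigma_{2m+2}^0$ predicates is again $\Sigma_{2m+2}^0$: one merges the two standard prefixes $(\exists)(\forall)\cdots(\forall)$ level by level through the tupling function and conjoins the two computable matrices. Hence $\textrm{PropRank}$ is defined by a predicate with a prefix of length $2m+2$ beginning with $\exists$, which is precisely membership in the Feiner class $\Sigma_{\langle n,m\rangle\mapsto 2m+2}^0$.

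The only delicate point — and the step I expect to be the main obstacle — is uniformity in $m$. Membership of $\textrm{PropRank}$ in a Feiner class is not a claim at a single arithmetical level but at levels growing with the input, so each manipulation above (complementation, the inclusion $\Pi_{2m}^0 \subseteq \Sigma_{2m+2}^0$, and closure of $\Sigma_{2m+2}^0$ under conjunction) must be realized by one recipe applied uniformly to the matrix furnished by Lemma \ref{prop}. This is exactly the uniformity already relied upon in the excerpt (``the constructions are uniform in $n$''): each operation on quantifier prefixes is a fixed transformation of predicates and index functions, so their composition yields a single computable matrix together with the prefix of length $2m+2$, as required. I would close, as in the proof of Lemma \ref{prop}, by remarking that these Boolean operations do not change the quantifier complexity beyond the level bookkeeping just described.
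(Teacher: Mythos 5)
Your proposal is correct and follows essentially the same route as the paper, which likewise derives the lemma from Lemma \ref{prop} via the equivalence $\langle n, m \rangle \in \textrm{PropRank} \Leftrightarrow \textrm{Prop}(n,m) \wedge \neg\,\textrm{Prop}(n,m+1)$ and the standard (uniform) quantifier bookkeeping. Your explicit handling of the $m=0$ case and of uniformity in $m$ is more careful than the paper's one-line proof, but it is elaboration of the same argument, not a different one.
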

\begin{proof}
Follows from Lemma \ref{prop} because 
\[
\langle n, m \rangle \in \textrm{PropRank} \ \Leftrightarrow \ \langle n, m \rangle \in \textrm{Prop} \ \wedge \ \langle n, m + 1 \rangle \notin \textrm{Prop}
\]
\end{proof}

\begin{lemma}
\label{inner}
Assume that $X$ is computably compact.
\begin{itemize}
    \item[(i)] The distance $D(n)$ between a special point $x_n$ and the complement of its connected component is right-c.e.\ uniformly in $n$.
    \item[(ii)] $\textrm{Inner} := \left\{ n ~:~ x_n \textrm{ is in the interior of its connected component} \right\} \in \Sigma_2^0$
\end{itemize}
\end{lemma}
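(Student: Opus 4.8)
The plan is to route both parts through the single quantity $D(n) = d(x_n, X \setminus C)$, where $C$ is the connected component of $x_n$. By Fact \ref{quasi} the component equals the quasi-component, so $C = \bigcap_k C_{n,k}$ for the enumeration $(C_{n,k})_k$ of all clopen sets containing $x_n$ constructed in the proof of Lemma \ref{prop}. Recall that each $C_{n,k}$ is a finite union of basic balls, and that by Fact \ref{decomp} its complement $X \setminus C_{n,k}$ (the union of the remaining clopen parts of the decomposition witnessing $C_{n,k}$) is likewise presented as a finite union of basic balls.

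For (i), I would start from $X \setminus C = \bigcup_k (X \setminus C_{n,k})$, which gives $D(n) = \inf_k d(x_n, X \setminus C_{n,k})$. Rather than compute the distance to a union of open balls by a closed formula -- which is unavailable in a general computable Polish space, since the distance to an open ball need not be determined by its radius -- I would exploit that $X \setminus C_{n,k}$ is open and that the special points are dense, so that
\[
d(x_n, X \setminus C_{n,k}) = \inf \{ d(x_n, x_i) ~:~ x_i \in X \setminus C_{n,k} \}.
\]
Because $X \setminus C_{n,k}$ is a finite union of basic balls, the membership $x_i \in X \setminus C_{n,k}$ is c.e.\ uniformly, and hence
\[
D(n) < q \ \Leftrightarrow \ (\exists k)(\exists i)\big[\, x_i \in X \setminus C_{n,k} \ \wedge \ d(x_n, x_i) < q \,\big]
\]
is c.e.\ uniformly in $(n, q) \in \N \times \Q$. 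This is exactly the assertion that $D(n)$ is right-c.e.\ uniformly in $n$. (In the degenerate case $C = X$ every $X \setminus C_{n,k}$ is empty, $D(n) = +\infty$, and no $q$ witnesses $D(n) < q$, as required.)

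For (ii), I would observe that $x_n$ lies in the interior of $C$ precisely when $D(n) > 0$: since $B(x_n, \varepsilon) \subseteq C$ holds iff $d(x_n, X \setminus C) \geq \varepsilon$, the point $x_n$ is interior iff $D(n) \geq \varepsilon$ for some $\varepsilon > 0$. Therefore
\[
n \in \textrm{Inner} \ \Leftrightarrow \ (\exists q \in \Q^{>0})\, \neg\big( D(n) < q \big).
\]
By (i) the predicate $D(n) < q$ is $\Sigma_1^0$, so $\neg(D(n) < q)$ is $\Pi_1^0$, and the leading existential rational quantifier places $\textrm{Inner}$ in $\Sigma_2^0$.

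The step I expect to be the only delicate one is the distance computation in (i): the temptation is to evaluate $d(x_n, B(x_{i_j}, t_j))$ as $\max(0, d(x_n, x_{i_j}) - t_j)$, but this identity can fail in a general metric space, so the argument must instead approximate $D(n)$ from above through special points lying in the open complement. Once this is handled, the reduction of the component to the quasi-component and the quantifier counting in (ii) are routine.
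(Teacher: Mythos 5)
Your proof is correct, and it shares the paper's overall scaffolding (reduce the component to the quasi-component via Fact \ref{quasi}, obtain the relevant clopen sets with ball representations from Fact \ref{decomp}, and in (ii) note that $x_n$ is interior iff $D(n) > 0$ and count quantifiers), but the key computational step in (i) is genuinely different. The paper enumerates the clopen sets $C_m$ with $x_n \in X \setminus C_m$ directly (rather than complementing clopen sets containing $x_n$, as you do) and then exploits the resolution-refinement clause (ii) of Fact \ref{decomp}: since every decomposition recurs with representing radii $r_{m,i} < 2^{-m}$, it can set $\tilde{D}(n) = \inf_{m,i}\, d(x_{n_{m,i}}, x_n)$ over the \emph{centers} of the representing balls --- no membership test is needed, because each center automatically lies in its own ball and hence in $X \setminus C$, while the shrinking radii force $\tilde{D}(n) \leq D(n)$. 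You instead use that $X \setminus C$ is a c.e.\ open set in which the special points are dense, so that $D(n)$ is the infimum of $d(x_n, x_i)$ over special points $x_i$ semi-decidably verified to lie in some $X \setminus C_{n,k}$; this requires that membership of a special point in a basic ball, i.e.\ $d(x_i, x_j) < t$, is uniformly c.e., which holds since $d$ is computable on special points. The pitfall you flag --- that $d\bigl(x_n, B(x_j, t)\bigr)$ need not equal $\max\bigl(0, d(x_n, x_j) - t\bigr)$ in a general metric space --- is exactly what the paper's shrinking-radius trick is engineered around, and your density argument circumvents it equally well without invoking clause (ii) of Fact \ref{decomp} at all; conversely, the paper's center-based infimum avoids any membership semi-decision. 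Your treatment of the degenerate case $C = X$ (so $D(n) = +\infty$) and the strict-inequality bookkeeping in the equivalence $D(n) < q \Leftrightarrow (\exists k)(\exists i)\bigl[x_i \in X \setminus C_{n,k} \wedge d(x_n, x_i) < q\bigr]$ are both handled correctly, and part (ii) then matches the paper's argument essentially verbatim.
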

\begin{proof}
(i): As a consequence of Fact \ref{decomp} we can enumerate a sequence $(C_m)_m$ that contains (with repetition) all clopen sets $C_m$ with $x_n \in X \setminus C_m$, 
each as a finite union of basic balls, say
\[
C_m := \bigcup_{i = 1}^{k_m} B(x_{n_{m,i}}, r_{m,i}),
\]
and such that $r_{m,i} < 2^{-m}$. By Fact \ref{quasi}, we have $D(n) = d\left(x_n, \bigcup_m C_m\right)$. 
Define
\[
\tilde{D}(n) = \inf \left\{ d(x_{n_{m,i}}, x_n) ~:~ m \in \N,\ 1 \leq i \leq k_m \right\}.
\]
We claim $\tilde{D}(n) = D(n)$ which implies that $D(n)$ is right-c.e. 
In fact, for all $m \in \N$, $1 \leq i \leq k_m$, we have
\[
D(n) = d\left(x_n, \bigcup_\ell C_\ell \right) \leq d(x_n, C_m) \leq d(x_{n_{m,i}}, x_n),
\]
so $D(n) \leq \tilde{D}(n)$.
On the other hand, for every $\varepsilon > 0$ there is an $x \in \bigcup_\ell C_\ell$ such that $d(x, x_n) - D(n) < \varepsilon$. 
Choose $m$ such that $x \in C_m$ and $2^{-m} < \varepsilon$, then choose $i$ such that $x \in B(x_{n_{m,i}}, r_{m,i})$. 
Then 
\[
d(x_{n_{m,i}}, x_n) - D(n) \leq d(x_{n_{m,i}}, x) + d(x, x_n) - D(n) < d(x_{n_{m,i}}, x) + \varepsilon < r_{m,i} + \varepsilon < 2^{-m} + \varepsilon < 2\varepsilon.
\]
As $\varepsilon > 0$ was arbitrary, this yields $\tilde{D}(n) \leq D(n)$.

(ii): By (i) there is a computable $g: \N \times \N \to \Q$ such that $(g(n, \ell))_\ell$ converges to $D(n)$ from above. $x_n$ is in the interior
of its connected component iff $D(n) > 0$ iff $(\exists m)(\forall \ell)\ g(n,\ell) > 2^{-m}$. 
\end{proof}

A component of $X$ has nonempty interior iff its interior contains a special point, so
\[
m \in \rho(X) \ \Leftrightarrow \ (\exists n)\left[ n \in \textrm{Inner}\ \wedge\ \langle n, m \rangle \in \textrm{PropRank} \right]
\]
Proposition \ref{upperBound} then follows from Lemmas \ref{inner}(ii) and \ref{proprank}.

\section{Proof of Proposition \ref{lowerBound}}

\begin{lemma}
\label{1proper}
Uniformly relative to any oracle $O \subseteq \N$, there is a perfect c.e.\ closed set $G \subseteq [0,1]$ such that
$1 \in G$ and, considering the condition
\begin{equation}
\label{fef}
(\exists^\infty k)(\forall \ell)\ \langle k, \ell \rangle \in O,
\end{equation}
we have:
\begin{itemize}
    \item[(i)] If \eqref{fef} is true:
    \begin{itemize}
        \item[(a)] $G$ is a countably infinite union of disjoint closed intervals and $\{1\}$. 
        \item[(b)] The point $1$ is proper in $G$ with rank $1$.
        \item[(c)] There are no other points that are proper in $G$.
        \item[(d)] The connected component of $1$ is $\{1\}$ and has empty interior in $G$.
    \end{itemize}
    \item[(ii)] If \eqref{fef} is false, $G$ is a finite union of closed intervals 
        (and thus does not have any proper points). 
\end{itemize}
\end{lemma}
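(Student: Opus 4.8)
The plan is to build $G$ as a disjoint arrangement of closed ``blocks'' accumulating at $1$, separated by ``gaps'' that are selectively filled in using the oracle. Set $q_k := 1 - 2^{-k}$, so $q_k \nearrow 1$, and put
\[
A_k := [q_{2k}, q_{2k+1}], \qquad \Gamma_k := (q_{2k+1}, q_{2k+2}) \qquad (k \in \N).
\]
Each block $A_k$ is a nondegenerate closed interval and will always belong to $G$; the gap $\Gamma_k$ will be \emph{filled} (i.e.\ $[q_{2k+1},q_{2k+2}]$ added to $G$) exactly when we discover that $k$ is \emph{bad}, meaning $(\exists \ell)\ \langle k, \ell\rangle \notin O$. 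Call $k$ \emph{good} otherwise, so that \eqref{fef} holds iff infinitely many $k$ are good, i.e.\ iff infinitely many gaps remain unfilled. Formally I take
\[
G := \{1\} \cup \bigcup_{k} A_k \cup \bigcup_{k \textrm{ bad}} [q_{2k+1}, q_{2k+2}].
\]

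Relative to $O$ I would enumerate a sequence dense in $G$ as follows: unconditionally list a fixed dense sequence of rationals in $\bigcup_k A_k$, and in parallel search, for each $k$, for a witness $\ell$ with $\langle k,\ell\rangle \notin O$; upon finding one, list a dense sequence of rationals in $[q_{2k+1},q_{2k+2}]$. The only use of the oracle is in detecting bad $k$, so this procedure is uniform in $O$ and monotone, whence $G$ is (uniformly) c.e.\ closed relative to $O$. Its complement in $[0,1]$ is $\bigcup_{k \textrm{ good}} \Gamma_k$, which is open, so $G$ is closed, and $1 \in G$ as a limit of the blocks. Since every block is nondegenerate and $1$ is never isolated, $G$ is perfect in both cases.

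For the verification I would first record the single topological fact driving everything: the maximal runs of consecutive blocks joined by filled gaps are nondegenerate closed intervals, and each such component $J$ is clopen (the adjacent unfilled gaps, or the global endpoint $q_0=0$, make its complement closed). If a clopen $C$ meets the open neighbourhood $J$ of one of its points, then $C \cap J$ is clopen in the connected $J$, so $J \subseteq C$; hence at most one member of any pairwise disjoint family of clopen sets meets $J$, and no point of $J$ is proper. This gives (i)(c) and the absence of proper points in (ii). In case (ii) finitely many $k$ are good, so all gaps beyond some $k_0$ are filled, and the blocks and gaps from $k_0$ on merge into the single interval $[q_{2k_0},1]$, leaving finitely many closed-interval components; this is (ii). In case (i) there are infinitely many unfilled gaps arbitrarily close to $1$: cutting at such a gap $\Gamma_j$ (where $G \cap (q_{2i-1},q_{2i'+2}) = J$ for the enclosed run) produces, for each $x \ne 1$, a clopen set containing $1$ but not $x$, so the component of $1$ is $\{1\}$, with empty interior since $1$ is a limit of blocks; this yields (i)(a) and (i)(d). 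Finally every neighbourhood of $1$ contains infinitely many of these finite-interval components, which are pairwise disjoint clopen sets, so $1$ is proper, and as these are the only proper points it has rank exactly $1$, giving (i)(b).

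The main obstacle is not the construction but the bookkeeping of the component structure: one must ensure that a \emph{good} $k$ (a $\Pi$-condition we cannot decide) corresponds to a gap that is \emph{never} filled, so that it genuinely and permanently separates the adjacent components, while cofinitely many filled gaps collapse the tail into the single interval $[q_{2k_0},1]$. The clopen-cut argument isolating the component $\{1\}$, which relies on having unfilled gaps arbitrarily close to $1$, is the step to state with the most care.
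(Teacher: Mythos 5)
Your proposal is correct and is essentially the paper's own construction: the paper's $G_k$ is the short block $\left[1-2^{-k},\,1-2^{-k}+2^{-(k+2)}\right]$ extended to fill the gap up to $1-2^{-(k+1)}$ exactly on the c.e.\ event $(\exists \ell)\,\langle k,\ell\rangle \notin O$, which is your block/gap scheme under a different indexing, with the same verification that the maximal runs are clopen intervals accumulating at $1$. If anything, your clopen-run argument (any clopen set meeting a connected clopen interval component must contain it, so no point of such a component is proper) spells out the verification of (i)(c) and the parenthetical in (ii) in slightly more detail than the paper, which asserts these points tersely.
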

\begin{proof}
Define
\[
G_k =
\begin{cases}
\left[ 1 - 2^{-k}, 1 - 2^{-k} + 2^{-(k+2)} \right] & \text{if } (\forall \ell)\ \langle k, \ell \rangle \in O, \\
\left[ 1 - 2^{-k}, 1 - 2^{-(k+1)} \right]          & \text{otherwise.}
\end{cases}
\]
and $G := \bigcup_k G_k \cup \{1\}$. The $G_k$ are constructed as a sequence of closed intervals, closer and
closer to $1$, such that the gap between $G_k$ and $G_{k+1}$ is $2^{-(k+2)}$ if $(\forall \ell)\ \langle k, \ell \rangle \in O$;
the intervals intersect in $1 - 2^{-(k+1)}$ otherwise. 
This clearly implies (i)(a) and (ii). Also, $G$ is c.e.\ because the potential part of $G_k$ closing the gap to $G_{k+1}$ 
is added on the enumerable condition $(\exists \ell)\ \langle k, \ell \rangle \notin O$.

Assume \eqref{fef} is true and let $(\tilde{G}_j)_j$ be the maximal closed intervals contained in $G$, in ascending order.
(So each $\tilde{G}_j$ is a union of certain finitely many $G_k$.) Each $\tilde{G}_j$ is clopen in $G$. 
Every neighbourhood of $1$ contains infinitely many $\tilde{G}_j$, so $1$ is proper. 
The component of $1$ is $\{1\}$, because every $x \in G \setminus \{1\}$
is contained in one of the $\tilde{G}_j$ which is separated from $1$ by the gap between $\tilde{G}_j$ and $\tilde{G}_{j+1}$.
$\{1\}$ has empty interior because every neighbourhood $1$ also intersects some $\tilde{G}_j$ and thus exceeds $\{1\}$. 
So all properties listed under (i) hold.
\end{proof}

\begin{lemma}
\label{blocks}
Uniformly relative to any oracle $O \subseteq \N$, there is a computable sequence $(H_m)_{m \geq 1}$ of
perfect c.e.\ closed sets $H_m \subseteq [0,1]$ such that for every $m \geq 1$ 
\begin{itemize}
    \item[(i)] all components of $H_m$ with non-empty interior have proper rank $0$ in $H_m$,
    \item[(ii)] if $1$ is proper in $H_m$, its proper rank is at most $m$,
    \item[(iii)] $1$ is $m$-proper in $H_m$ iff 
    \[
    (\exists^\infty k_m) \cdots (\exists^\infty k_1)(\forall \ell)\ \langle m, k_m, \ldots k_1, \ell \rangle \in O^{(m)},
    \]
    where
    \[
    O^{(m)} := \left\{ \langle k_i, \ldots,k_1 \ell \rangle ~:~ \langle m, k_m, \ldots,k_1 \ell \rangle \in O  \right\}.
    \]
\end{itemize}
\end{lemma}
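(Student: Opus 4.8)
The plan is to construct the sequence by recursion on $m$, using Lemma \ref{1proper} as the base case and, at each step, stringing together infinitely many rescaled copies of the previous-level set along disjoint intervals accumulating at $1$. It is cleanest to prove a parametrized version: for each $m \geq 1$ and each oracle $P \subseteq \N$ I build a perfect c.e.\ closed $H_m^P \subseteq [0,1]$ with $1 \in H_m^P$ such that $1$ is $m$-proper in $H_m^P$ iff $(\exists^\infty k_m)\cdots(\exists^\infty k_1)(\forall \ell)\ \langle k_m, \ldots, k_1, \ell\rangle \in P$; the lemma then follows by taking $P := O^{(m)}$. For $m=1$ I set $H_1^P := G$ from Lemma \ref{1proper} with oracle $P$, whose condition \eqref{fef} is exactly the required $m=1$ statement, so that the desired (i)--(iii) read off directly from parts (i)(a)--(d) and (ii) of that lemma.

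For the recursive step I fix disjoint closed rational intervals $I_1, I_2, \ldots \subseteq [0,1)$, separated by gaps, whose right endpoints increase to $1$ (e.g.\ $I_k = [1-2^{-k},\,1-2^{-k}+2^{-k-3}]$). Into $I_k$ I affinely embed a copy of $H_{m-1}^{P_k}$, where $P_k := \{\langle k_{m-1},\ldots,k_1,\ell\rangle : \langle k, k_{m-1},\ldots,k_1,\ell\rangle \in P\}$ is the slice of $P$ at first coordinate $k$, placed so the copy's point $1$ lands on the right endpoint of $I_k$; call that point $p_k$. Set $H_m^P := \{1\} \cup \bigcup_k (\text{copy in } I_k)$. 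Since the $I_k$ are gap-separated and $p_k \to 1$, this set is closed and perfect, and it is c.e.\ closed uniformly because affine rescaling with rational coefficients preserves c.e.\ closedness and the copies together with $1$ can be enumerated uniformly in $m$ and relative to $P$; unfolding the recursion $m$ times yields the required uniformly computable sequence.

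To verify the properties I strengthen the induction hypothesis to also record: (E) every point of $H_m^P$ has proper rank at most $m$; and (D) if $1$ is not $m$-proper, then $H_m^P$ has no $m$-proper point at all. The engine of all verifications is the elementary observation that proper rank is absolute in clopen subsets: if $C$ is clopen in a compact space $X$ and $x \in C$, then $x$ is $n$-proper in $C$ iff it is $n$-proper in $X$ (the clopen subsets of $C$ are precisely the clopen subsets of $X$ contained in $C$, and intersecting a clopen set with $C$ preserves disjointness and the property of meeting a neighbourhood lying inside $C$). Because each copy in $I_k$ is clopen in $H_m^P$, its proper structure is inherited verbatim from $H_{m-1}^{P_k}$; by the induction hypothesis $p_k$ is $(m-1)$-proper iff $P_k$ meets the inner $(m-1)$-fold condition, i.e.\ iff $(\exists^\infty k_{m-1})\cdots(\forall \ell)\ \langle k,k_{m-1},\ldots,\ell\rangle \in P$, and by (D) at level $m-1$ the copy carries no $(m-1)$-proper point when that fails.

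Property (iii) then follows: as $p_k \to 1$, the point $1$ is $m$-proper iff every neighbourhood holds infinitely many $(m-1)$-proper points, which, since the failing copies contribute none, occurs iff infinitely many $k$ make $P_k$ meet the inner condition, i.e.\ iff the full condition on $P$ holds. Property (i) holds since a component of $H_m^P$ with non-empty interior must sit inside a single clopen copy $I_k$ ($\{1\}$ having empty interior and the $I_k$ being separated), hence has rank $0$ by the inductive (i). For (E), copy points have rank $\le m-1$ by induction and the only positive-rank points accumulating at $1$ are the $p_k$ of rank $\le m-1$, forcing $1$ to have rank $\le m$; this also gives (ii). Finally (D): if $1$ is not $m$-proper, only finitely many copies carry $(m-1)$-proper points, every copy has all ranks $\le m-1$ by (E), and $1$ itself is not $m$-proper, so no $m$-proper point exists. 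I expect the main obstacle to be making the ``only if'' half of (iii) airtight, as this is exactly where (D) and the clopen-absoluteness of proper rank must be combined to rule out stray $(m-1)$-proper points near $1$ coming from failing copies.
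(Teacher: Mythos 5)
Your construction is essentially identical to the paper's: both build $H_m$ by recursion, placing affinely rescaled copies of the level-$(m-1)$ sets (relative to the oracle slices at each first coordinate) into gap-separated intervals accumulating at $1$, with Lemma \ref{1proper} as the base case. Your verification is correct and in fact more explicit than the paper's, which compresses it into ``easy to show by induction''; your auxiliary invariants (the rank bound (E), the no-stray-proper-points property (D), and the clopen-absoluteness of proper rank) are precisely what that induction needs.
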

\begin{proof}
For fixed $m$, we use a recursive construction in $i = 1, \ldots, m$. In each stage we compute
a sequence of c.e.\ closed sets, each as a countable union of certain scaled and shifted sets from
the previous stage. 
In preparation, for every $i = 1, \ldots, m$ and $(k_m, \ldots, k_{i+1}) \in \N^{m-i}$, generalize
the definition of $O^{(m)}$ to
\[
O^{(m, k_m, \ldots, k_{i+1})} := \left\{ \langle k_m, \ldots,k_1 \ell \rangle ~:~ \langle m, k_m, \ldots,k_1 \ell \rangle \in O  \right\}.
\]

For $i=1$ compute sets $H^{(m, k_m, \ldots, k_2)}$ with the construction from Lemma \ref{1proper},
applied relative to $A^{(m, k_m, \ldots, k_2)}$, for each combination $(k_m, \ldots, k_2) \in \N^{m-1}$. 

For $i > 1$, assume $H^{(m, k_m, \ldots, k_{i})}$ have already been constructed for all $(k_m, \ldots, k_{i}) \in \N^{m-(i-1)}$. Define
\[
H^{(m, k_m, \ldots, k_{i+1})} := \bigcup_{k_i \in \N} \left(1 - 2^{k_i} + 2^{-(k_i+2)} H^{(m, k_m, \ldots, k_{i})} \right) \ \cup \ \{1\}.
\]

After the last stage of the construction, i.e.\ $i=m$, the set $H^{(m)}$ has been constructed. Put $H_m = H^{(m)}$. 
It is easy to show by induction on $i=1, \ldots, m$ that every set $H^{(m, k_m, \ldots, k_{i+1})}$ appearing in the construction fulfills
properties (i)-(iii) with $i$ in place of $m$ and $O^{(m, k_m, \ldots, k_{i+1})}$ in place of $O^{(m)}$, where the base 
case is yielded by Lemma \ref{1proper}. So (i)-(iii) hold for $H_m$ and $O^{(m)}$ in particular. 
Also, $H_m$ is c.e.\ as the union of a computable sequence of c.e.\ sets. 
\end{proof}

For the proof of Proposition \ref{lowerBound}, we need to construct a c.e.\ closed subset 
$K$ of $[0,1]$ such that $\rho(K) = A$ 
for given $A \in \Sigma_{m \mapsto 2m + 3}^0$ with $\{2n ~:~ n\in \N\} \subseteq A$.

By the characterization of $\Pi_{m \mapsto 2m + 3}^0$ mentioned in Section \ref{feiner}, there is a
computable predicate $p$ such that
\[
m \notin A \ \Leftrightarrow \ (\exists^\infty k_{m+1}) \cdots (\exists^\infty k_1)(\forall \ell)\ p(\langle m, k_{m+1}, \ldots k_1, \ell \rangle).
\]
Choose 
\[
O := \{ \langle m+1, k_{m+1}, \ldots k_1, \ell \rangle ~:~ p(\langle m, k_{m+1}, \ldots k_1, \ell \rangle) \}
\]
and note that $O$ is computable. Construct $(H_m)_m$ as in Lemma \ref{blocks} relative to this $O$. 
Then $(H_m)_m$ is a computable sequence of c.e.\ closed sets such that 
\begin{equation*}
\begin{array}{rl}
m \notin A
& \ \Leftrightarrow \ (\exists^\infty k_{m+1}) \cdots (\exists^\infty k_1)(\forall \ell)\ p(\langle m, k_{m+1}, \ldots k_1, \ell \rangle) \\
& \ \Leftrightarrow \ (\exists^\infty k_{m+1}) \cdots (\exists^\infty k_1)(\forall \ell)\ \langle m + 1, k_{m+1}, \ldots k_1, \ell \rangle \in O \\
&  \ \Leftrightarrow \ 1 \text{ is } (m+1) \text {-proper in } H_{m+1}.
\end{array}
\end{equation*}
Construct another sequence $(\tilde{H}_m)_m$ as in Lemma \ref{blocks}, but this time relative to $\N$, i.e.
$(\tilde{H}_m)_m$ is a generic sequence of c.e.\ closed sets such that $1$ is $m$-proper in each $\tilde{H}_m$.
For every $m$, define
\[
K_m :=
\begin{cases}
\tilde{H}_m \cup [1, 2]                    & \text{if $m$ is even}, \\
\tilde{H}_m \cup [1, 2] \cup (3 - H_{m+1}) & \text{if $m$ is odd.}
\end{cases}
\]
Each $K_m$ has $[1,2]$ as a connected component with non-empty interior and the $m$-proper point $1$. 
If $m$ is odd and $m \notin A$, the component $[1,2]$ also has the $(m+1)$-proper point $2$, so
the proper rank of $[1,2]$ is $m+1$.
If $m$ is odd and $m \in A$, the proper rank of $2$ is at most $m$, so the proper rank of $[1,2]$ is $m$.
All other components of $K_m$ with non-empty interior are the closed intervals from the constructions of
$\tilde{H}_m$ and $H_m$; they all have proper rank $0$. 
To summarize:
\[
\rho(K_m) :=
\begin{cases}
\{0, m+1\}                    & \text{if $m$ is odd and $m \notin A$}, \\
\{0, m\}                      & \text{otherwise.}
\end{cases}
\]
For the construction of $K$ it is sufficient to line up scaled down separated copies of the $K_m$ in $[0,1]$
and add the closing point at $1$:
\[
K = \bigcup_m \left(1 - 2^{-m} + 2^{-(m+3)} + 2^{-(m+4)}K_m \right) \cup \{ 1 \}.
\]
Then
\[
\begin{array}{rl}
\rho(K) 
& = \bigcup_m \rho(K_{2m}) \cup \bigcup_m \rho(K_{2m+1}) \\
& = \{ 2m ~:~ m \in \N \} \cup \left( \{ 2m+1 ~:~ m\in \N,\, 2m+1 \in A \} \cup \{ 2m+2 ~:~ m\in \N,\, 2m+1 \notin A \} \cup \{ 0 \} \right) \\
& = \{ 2m ~:~ m \in \N \} \cup \{ 2m+1 ~:~ m\in \N,\, 2m+1 \in A \} \\
& = A
\end{array}
\]
where the last equality follows from the assumption $\{ 2m ~:~ m \in \N \} \subseteq A$.

\bibliographystyle{plain}

\end{document}